\newdimen\plusheight
\def\+{\;\lower\plusheight\hbox{$+$}\;}  
\newdimen\minusheight
\def\-{\;\lower\minusheight\hbox{$-$}\;}
\newdimen\cdotsheight
\def\cds{\lower\cdotsheight\hbox{$\cdots$}}
\numberwithin{equation}{section}
\newtheorem{thm}{Theorem}[section]
\newtheorem{lem}[thm]{Lemma}
\newtheorem{prop}[thm]{Proposition}
\newtheorem{cor}{Corollary}
\theoremstyle{definition}
\newtheorem{defn}{Definition}[section]
\newtheorem{exmp}{Example}[section]
\theoremstyle{remark}
\newtheorem{rem}{\bf{Remark}}
\newtheorem{note}{\bf{Note}}
\numberwithin{equation}{section}
\begin{document}

\setcounter {page}{1}
\title{ On rough $I^*$ and $I^K$-convergence of sequences in  normed linear spaces}

\author{Amar Kumar Banerjee and Anirban Paul}

\address[Amar Kumar Banerjee]{Department of Mathematics, The University of Burdwan, Golapbag, Burdwan-713104, West Bengal, India.}

\email{akbanerjee@math.buruniv.ac.in, akbanerjee1971@gmail.com }

\address[Anirban Paul]{Department of Mathematics, The University of Burdwan, Golapbag, Burdwan-713104, West Bengal, India.}
         
\email{paulanirban310@gmail.com}

\begin{abstract} 
In this paper, we have introduced first the notion of rough $I^*$-convergence in a normed linear space as an extension work of rough $I$-convergence and then rough $I^K$-convergence in more general way. Then we have studied some properties on these two newly introduced ideas. We also examined the relationship between rough $I$-convergence with both of rough $I^*$-convergence and rough $I^K$-convergence. 
\end{abstract}
\maketitle
\author{}

\textbf{Key words and phrases:}
Rough convergence, Rough $I$-convergence, Rough $I^*$-convergence, Rough $I^K$-convergence, (AP) condition.  \\

\textbf {(2010) AMS subject classification :} 54A20, 40A35. \\
\section{Introduction}
The concepts of convergence of a sequence of real numbers has been generalised to statistical convergence independently by Fast \cite{f} and Steinhaus \cite{q}. Then a lot of developments were carried out in this area by many authors. The concepts of statistical convergence of sequence has been extended to $I$-convergence by Kostyrko et al. \cite{h,t**} using the structure of the ideal $I$ of subsets of the set of natural numbers. An another type of convergence which is closely related to the ideas of $I$-convergence is the idea of $I^*$-convergence given by Kostyrko et al. \cite{4th4}. It is seen in \cite{h} that these notions are equivalent if and only if the ideal satisfies the property (AP). Several works have been done in recent years on $I$-convergence (see \cite{4,3,5,6,i,j}).\\
The idea of rough convergence in a finite dimensional space was introduced by Phu \cite{R2} in 2001. In 2014, D\"undar et al. \cite{R7} introduced the notion rough $I$-convergence using the concepts of $I$-convergence and rough  convergence. For given two arbitrary ideals $I$ and $K$ on a set $S$, the idea of $I^K$-convergence in topological space was given by M. Ma\v{c}aj and M. Sleziak in \cite{4th2} as a generalization of the notion of $I^*$-convergence. In their paper they modified the condition (AP) and have showed that when such condition termed as AP($I$, $K$) holds then $I$-convergence implies $I^K$-convergence and the converse of this result also holds for the first countable space which is not finitely generated( or Alexandroff space). Indeed, they worked with functions instead of sequences. One of the reasons is that using functions sometimes helps to simplify notations.\\
In our work we have introduced the notion of rough $I^*$-convergence and rough $I^K$-convergence in a normed linear space. Rough $I^K$-convergence is a common generalization of rough $I^*$-convergence. Here we have studied the ideas of rough $I^*$-convergence and rough $I^K$-convergence in terms of sequences instead of functions. We then intend to find the relation between rough $I$-convergence and rough $I^*$-convergence. We also tried to find the relation of rough $I$-convergence with rough $I^K$-convergence and we have observed that the condition AP($I$,$K$) is an necessary and sufficient condition for the rough $I$-limit set to be a subset of rough $I^K$-limit set. We have tried to verify whether some property of $I^K$-convergence as in \cite{4th2} also holds for rough $I^K$-convergence.
\\
We now recall some definitions and notions which will be needed in sequel.
\section{Preliminaries}
Throughout the paper, $\mathbb{N}$ denotes the set of all natural numbers, $\mathbb{R}$ the set of all real numbers unless otherwise stated.
\begin{defn}\cite{f}
Let $K$ be a subset of the set of natural numbers $\mathbb{N}$ and let us denote the set $K_i =\{k\in K : k\leq i\}$. Then the natural density of $K$ is given by $d(K)=\displaystyle{\lim_{i\rightarrow \infty}}\frac{|K_i|}{i}$, where $|K_i|$ denotes the number of elements in $K_i$. 
\end{defn}
\begin{defn}\cite{f}
A sequence $\{x_n\}_{n\in\mathbb{N}}$ of real numbers is said to be statistically convergent to $x$ if for any $\varepsilon >0$, $d(A(\varepsilon))=0$, where $A(\varepsilon)=\{n\in \mathbb{N}: |x_n - x|\geq \varepsilon\}$.
\end{defn}
Let $I$ be a collection of subset of a set $S$. Then $I$ is called an ideal on $S$ if $(i)$ $A, B\in I$ $\Rightarrow A\cup B\in I$ and $(ii)$ $A\in I$ and $B\subset A$ $\Rightarrow B\in I$ \cite{t}.\\
An ideal $I$ on $S$ is called admissible if it contains all singletons, that is, $\{s\}\in I$ for each $s\in S$. $I$ is called nontrivial if $S\notin I$ \cite{t}. From the definition it is noted that $\phi\in I$.\\
If $S=\mathbb{N}$, the set of all positive integers then $I$ is called an ideal on $\mathbb{N}$. We will denote by Fin the ideal of all finite subsets of a given set $S$. 
\begin{defn}\cite{4th3}
Let $S\neq \phi$. A non empty class $F$ is called a filer in $S$ provided: $(i)$ $\phi\notin F$, $(ii)$ $A, B\in F\Rightarrow A\cap B\in F$, $(iii)$ $A\in F, A\subset B \Rightarrow B\in F$.
\end{defn}
\begin{lem}\cite{h}
If $I$ is a non trivial ideal on $\mathbb{N}$, then the class $F(I)=\{M\subset \mathbb{N}: \text{there exists } A\in I\:\: \text{such that }\: M=\mathbb{N}\setminus A\}$ is a filter on $\mathbb{N}$, called the filter associated with $I$.
\end{lem}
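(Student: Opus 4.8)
The plan is to verify directly the three filter axioms of Definition 2.3, together with the non-emptiness requirement, using only the two defining properties of an ideal and the hypothesis that $I$ is nontrivial. First I would note that $F(I)\neq\emptyset$: since $I$ is an ideal we have $\phi\in I$ (as remarked after the definition of ideal), and $\mathbb{N}=\mathbb{N}\setminus\phi$, so $\mathbb{N}\in F(I)$. Next, for axiom (i) I would argue by contradiction: if $\phi\in F(I)$, then $\phi=\mathbb{N}\setminus A$ for some $A\in I$, which forces $A=\mathbb{N}$ and hence $\mathbb{N}\in I$, contradicting the nontriviality of $I$. This is the single place where nontriviality is used.

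For axiom (ii), suppose $M_1,M_2\in F(I)$, say $M_i=\mathbb{N}\setminus A_i$ with $A_i\in I$ for $i=1,2$. By De Morgan's law $M_1\cap M_2=\mathbb{N}\setminus(A_1\cup A_2)$, and $A_1\cup A_2\in I$ by the first ideal property (closure under finite unions), so $M_1\cap M_2\in F(I)$. For axiom (iii), suppose $M\in F(I)$ and $M\subset N\subset\mathbb{N}$; write $M=\mathbb{N}\setminus A$ with $A\in I$ and set $B=\mathbb{N}\setminus N$. Then $B\subset\mathbb{N}\setminus M=A$, so $B\in I$ by the second ideal property (hereditariness), and therefore $N=\mathbb{N}\setminus B\in F(I)$.

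I do not expect any genuine obstacle here; the statement is the standard set-theoretic duality between ideals and filters, and each of the four verifications is a one-line computation with complements. The only points requiring mild care are invoking nontriviality (rather than admissibility) precisely for $\phi\notin F(I)$, and handling the complements correctly in the superset axiom so that hereditariness of $I$ applies to the right set.
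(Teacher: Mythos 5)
Your verification is correct and complete: it is the standard ideal--filter duality argument, checking non-emptiness and the three filter axioms via complementation, with nontriviality invoked exactly where needed (to exclude $\phi$ from $F(I)$). The paper itself only cites this lemma from the literature without reproducing a proof, and there is no meaningfully different route; your argument is the canonical one.
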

\begin{defn}\cite{h}
An admissible ideal $I\subset 2^\mathbb{N}$ is said to satisfy the condition (AP) if for every countable family of mutual disjoint sets $\{A_1, A_2,\cdots\}$ belonging to $I$ there exists a countable family of sets $\{B_1, B_2,\cdots\}$ such that the symmetric difference $A_j\Delta B_j$ is a finite set for each $j\in\mathbb{N}$ and $B=\displaystyle{\bigcup_{j=1} ^ \infty} B_j \in I$. Several example of countable family satisfying (AP) are seen in \cite{h}.
\end{defn}
\begin{defn}\cite{h,t**}
Let $(X, ||\cdot||)$ be a normed linear space and $I\subset 2^\mathbb{N}$ be a non-trivial ideal. A sequence $\{x_n\}_{n\in\mathbb{N}}$ of elements of $X$ is said to be $I$-convergent to $x\in X$ if for each $\varepsilon >0$ the set $A(\varepsilon)=\{n\in\mathbb{N}: ||x_n - x||\geq \varepsilon\}$ belongs to $I$. The element $x$ is here called the $I$-limit of the sequence $\{x_n\}_{n\in\mathbb{N}}$.
\end{defn}
It should be noted here that if $I$ is an admissible ideal then usual convergence in $X$ implies $I$-convergence in $X$.
\begin{exmp}\cite{h}
If $I_d$ denotes the class of all $A\subset \mathbb{N}$ with $d(A)=0$. Then $I_d$ is non trivial admissible ideal and $I_d$ convergence coincides with the statistical convergence.
\end{exmp}
\begin{defn}\cite{4th4,h}
Let $(X, ||\cdot||)$ be a normed linear space and $I\subset 2^\mathbb{N}$ be a non-trivial ideal. A sequence $\{x_n\}_{n\in\mathbb{N}}$ in $X$ is said to be $I^*$-convergent to $x$ if there exists a set $M=\{m_1<m_2<\cdots<m_k<\cdots\}$ in $F(I)$ such that the sub sequence $\{x_{m_k}\}_{k\in \mathbb{N}}$ is convergent to $x$ i.e., $\displaystyle{\lim_{k\rightarrow \infty}} ||x - x_{m_k}|| =0$
\end{defn}
It is seen in \cite{h} that $I^*$-convergence implies $I$-convergence. If an admissible ideal $I$ has the property (AP), then for a sequence $\{x_n\}_{n\in\mathbb{N}}$ in a normed linear space $X$, $I$-convergence implies $I^*$-convergence.
\begin{defn}\label{correction}\cite{R2}
Let $\{x_n\}_{n\in\mathbb{N}}$ be a sequence in a normed linear space $(X, ||\cdot||)$ and $r$ be a non-negative real number. Then $\{x_n\}_{n\in\mathbb{N}}$ is said to be rough convergent of roughness degree $r$ to $x$ or simply $r$-convergent to $x$, denoted by $x_n \xrightarrow{r} x$, if for all $\varepsilon >0$ there exists $N(\varepsilon)\in\mathbb{N}$ such that $n\geq N(\varepsilon)$ implies $|| x_n - x||< r +\varepsilon$ and $x$ is called rough limit of $\{x_n\}_{n\in\mathbb{N}}$ of roughness degree $r$.
\end{defn}
For $r=0$ the definition \ref{correction} reduces to definition of classical convergence of sequences. Here $x$ is called the $r$-limit point of $\{x_n\}_{n\in\mathbb{N}}$, which is usually no more unique (for $r>0$). So we have to consider the so called $r$-limit set (or shortly $r$-limit) of $\{x_n\}_{n\in\mathbb{N}}$ defined by $LIM^r x_n :=\{ x\in X : x_n \xrightarrow{r} x\}$. A sequence $\{x_n\}_{n\in\mathbb{N}}$ is said to be $r$-convergent if $LIM^r x_n \neq \phi$. In this case, $r$ is called a rough convergence degree of $\{x_n\}_{n\in\mathbb{N}}$.
\begin{prop}\cite{R2}
A sequence $\{x_n\}_{n\in\mathbb{N}}$ in a normed linear space $X$ is bounded if and only if there exists an $r\geq 0$ such that $LIM^r x_n\neq \phi$. For all $r>0$, a bounded sequence $\{x_n\}_{n\in\mathbb{N}}$ always contains a subsequence $\{x_{m_k}\}_{k\in\mathbb{N}}$ with $LIM^r x_{m_k}\neq \phi$.
\end{prop}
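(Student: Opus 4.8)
The plan is to treat the biconditional and the subsequence assertion separately, and within the biconditional the two implications in turn; the only input beyond the definition of the $r$-limit set is the triangle inequality (and, for the last part, a compactness property of bounded sequences).

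For the forward implication of the equivalence, assume $\{x_n\}_{n\in\mathbb{N}}$ is bounded, say $\|x_n\|\le M$ for all $n$. I would take $r=M$ and check that the zero vector is an $r$-limit point: for every $\varepsilon>0$ and every $n$ one has $\|x_n-0\|=\|x_n\|\le M<M+\varepsilon$, so (with $N(\varepsilon)=1$) we get $0\in LIM^{M}x_n$, hence $LIM^{r}x_n\neq\emptyset$ for the nonnegative number $r=M$. For the converse, suppose $LIM^{r}x_n\neq\emptyset$ for some $r\ge 0$ and pick $x\in LIM^{r}x_n$. Applying the definition of $r$-convergence with $\varepsilon=1$ gives an $N\in\mathbb{N}$ with $\|x_n-x\|<r+1$ for all $n\ge N$; then $\|x_n\|\le\|x_n-x\|+\|x\|<r+1+\|x\|$ for $n\ge N$, and since the finitely many terms $x_1,\dots,x_{N-1}$ are trivially bounded, the number $\max\{\|x_1\|,\dots,\|x_{N-1}\|,\,r+1+\|x\|\}$ bounds the whole sequence.

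For the second assertion, fix $r>0$. Since $\{x_n\}_{n\in\mathbb{N}}$ is bounded, it admits a convergent subsequence $\{x_{m_k}\}_{k\in\mathbb{N}}$, say $x_{m_k}\to\xi$; I would then show that the closed ball $\{y\in X:\|y-\xi\|\le r\}$ is contained in $LIM^{r}x_{m_k}$. Indeed, given such a $y$ and any $\varepsilon>0$, choose $K$ with $\|x_{m_k}-\xi\|<\varepsilon$ for all $k\ge K$; then $\|x_{m_k}-y\|\le\|x_{m_k}-\xi\|+\|\xi-y\|<\varepsilon+r$ for $k\ge K$, so $y\in LIM^{r}x_{m_k}$. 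In particular $\xi$ lies in this ball, so $LIM^{r}x_{m_k}\neq\emptyset$, as required.

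The one genuinely delicate point is the extraction of a convergent subsequence of a bounded sequence in the last paragraph: this is automatic via the Bolzano--Weierstrass property in the finite-dimensional framework of \cite{R2}, but in an arbitrary normed linear space boundedness alone does not force sequential compactness, so this step is where any subtlety resides. All remaining steps are routine manipulations with the triangle inequality and the definition of $LIM^{r}x_n$.
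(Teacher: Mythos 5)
The paper does not prove this proposition; it is quoted verbatim from Phu \cite{R2} as background, so there is no in-paper argument to compare against. Your proof is correct and is essentially the standard one: taking $r=M$ with limit point $0$ for the forward direction, the triangle inequality with $\varepsilon=1$ for the converse, and Bolzano--Weierstrass plus the observation that the closed $r$-ball around a subsequential limit sits inside $LIM^r x_{m_k}$ for the final claim. You are also right to flag the one real subtlety: the last assertion genuinely needs the finite-dimensional setting of \cite{R2} (e.g.\ for the orthonormal basis of $\ell^2$ no subsequence has a nonempty $r$-limit set when $r<1$), so as restated here for a general normed linear space it should be read with that hypothesis understood.
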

\begin{prop}\cite{R2}
If $\{x'_n\}_{n\in\mathbb{N}}$ is a sub sequence of $\{x_n\}_{n\in\mathbb{N}}$ in a normed linear space $(X, ||\cdot||)$, then $LIM^r x_n \subset LIM^r x'_n$.
\end{prop}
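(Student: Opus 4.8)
The plan is to unwind the definition of the rough limit set and exploit the elementary fact that the indexing sequence of a subsequence grows at least linearly. Write the subsequence as $\{x'_n\}_{n\in\mathbb{N}}=\{x_{n_k}\}_{k\in\mathbb{N}}$, where $n_1<n_2<\cdots<n_k<\cdots$ is a strictly increasing sequence of natural numbers; a straightforward induction then gives $n_k\geq k$ for every $k\in\mathbb{N}$.

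First I would fix an arbitrary $x\in LIM^r x_n$ and show $x\in LIM^r x'_n$, which by definition of inclusion of sets establishes the claim. So let $\varepsilon>0$ be given. Since $x_n\xrightarrow{r}x$, Definition \ref{correction} supplies an $N(\varepsilon)\in\mathbb{N}$ such that $\|x_n-x\|<r+\varepsilon$ for all $n\geq N(\varepsilon)$.

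Next I would transfer this to the subsequence: for every $k\geq N(\varepsilon)$ we have $n_k\geq k\geq N(\varepsilon)$, hence $\|x'_k-x\|=\|x_{n_k}-x\|<r+\varepsilon$. Thus $N(\varepsilon)$ itself witnesses $x'_n\xrightarrow{r}x$, so $x\in LIM^r x'_n$. Since $x\in LIM^r x_n$ was arbitrary, $LIM^r x_n\subset LIM^r x'_n$.

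There is essentially no serious obstacle here; the only point requiring a word of care is the observation $n_k\geq k$, which is exactly what lets the threshold $N(\varepsilon)$ chosen for the full sequence serve unchanged for the subsequence. (One may also note the degenerate case $LIM^r x_n=\phi$, for which the inclusion is trivial.)
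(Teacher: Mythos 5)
Your proof is correct and is the standard argument: the paper states this proposition without proof (citing Phu), and the expected justification is exactly the one you give, namely that $n_k\geq k$ lets the same threshold $N(\varepsilon)$ work for the subsequence. Nothing is missing.
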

\begin{prop}\cite{R2}
For all $r\geq 0$, the $r$-limit set $LIM^r x_n$ of an arbitrary sequence $\{x_n\}_{n\in\mathbb{N}}$ in a normed linear space $(X, ||\cdot||)$ is closed set.
\end{prop}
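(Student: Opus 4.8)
The plan is to show that $LIM^{r}x_{n}$ contains all of its limit points, i.e. that $\overline{LIM^{r}x_{n}}\subseteq LIM^{r}x_{n}$, and the only tool needed is the triangle inequality in $(X,\|\cdot\|)$ together with Definition \ref{correction}. If $LIM^{r}x_{n}=\phi$ there is nothing to prove, so I would assume it is non-empty and pick an arbitrary point $y$ in its closure; the goal is then to verify that $x_{n}\xrightarrow{r}y$.

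So fix $\varepsilon>0$. By definition of the closure there is a point $z\in LIM^{r}x_{n}$ with $\|y-z\|<\varepsilon/2$. Since $z$ is an $r$-limit of $\{x_{n}\}_{n\in\mathbb{N}}$, applying Definition \ref{correction} with the positive parameter $\varepsilon/2$ yields an index $N(\varepsilon)\in\mathbb{N}$ such that $\|x_{n}-z\|<r+\varepsilon/2$ for every $n\geq N(\varepsilon)$.

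Then for every $n\geq N(\varepsilon)$ the triangle inequality gives
\[
\|x_{n}-y\|\;\le\;\|x_{n}-z\|+\|z-y\|\;<\;\left(r+\frac{\varepsilon}{2}\right)+\frac{\varepsilon}{2}\;=\;r+\varepsilon .
\]
Since $\varepsilon>0$ was arbitrary, this is exactly the statement that $x_{n}\xrightarrow{r}y$, hence $y\in LIM^{r}x_{n}$. As $y$ was an arbitrary point of $\overline{LIM^{r}x_{n}}$, we conclude $\overline{LIM^{r}x_{n}}\subseteq LIM^{r}x_{n}$, so $LIM^{r}x_{n}$ is closed for every $r\geq 0$.

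I do not expect any real obstacle; the only care required is the $\varepsilon/2$ splitting and the harmless edge case $LIM^{r}x_{n}=\phi$. As an alternative one could instead use the characterisation $LIM^{r}x_{n}=\{x\in X:\limsup_{n}\|x_{n}-x\|\le r\}$ and note that $x\mapsto\limsup_{n}\|x_{n}-x\|$ is $1$-Lipschitz (hence continuous) wherever it is finite, so that $LIM^{r}x_{n}$ is a sublevel set of a continuous function and therefore closed; but the direct triangle-inequality argument above is shorter and entirely self-contained, so that is the one I would write out.
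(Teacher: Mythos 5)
Your argument is correct: the $\varepsilon/2$-splitting via the triangle inequality is exactly the standard proof of Phu's result, and it is the same technique this paper itself uses when proving the analogous closedness theorem for the rough $I^K$-limit set (there one takes a sequence $y_n\to y$ in the limit set rather than a point of the set within $\varepsilon/2$ of $y$, which is an equivalent formulation of the same idea). No gaps; the empty-set case and the dependence of $z$ on $\varepsilon$ are both handled properly.
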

\begin{prop}\cite{R2}
For an arbitrary sequence $\{x_n\}_{n\in\mathbb{N}}$ in a normed linear space $(X, ||\cdot||)$ the $r$-limit set $LIM^r x_n$ is convex.
\end{prop}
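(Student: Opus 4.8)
The plan is to prove convexity directly from the definition, by showing that any convex combination of two $r$-limit points is again an $r$-limit point. Fix $y_1, y_2 \in LIM^r x_n$ and a scalar $\lambda \in [0,1]$, put $y := \lambda y_1 + (1-\lambda) y_2$, and aim to verify that $x_n \xrightarrow{r} y$. The degenerate cases $\lambda = 0$ and $\lambda = 1$ are immediate, so one may assume $0 < \lambda < 1$.

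The key observation is the algebraic identity $x_n - y = \lambda(x_n - y_1) + (1-\lambda)(x_n - y_2)$, which follows at once from writing $x_n = \lambda x_n + (1-\lambda) x_n$. Applying the triangle inequality together with the absolute homogeneity of the norm gives, for every $n \in \mathbb{N}$, the estimate $\|x_n - y\| \le \lambda \|x_n - y_1\| + (1-\lambda)\|x_n - y_2\|$.

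Now let $\varepsilon > 0$ be arbitrary. Since $y_1 \in LIM^r x_n$, there is $N_1 \in \mathbb{N}$ with $\|x_n - y_1\| < r + \varepsilon$ for all $n \ge N_1$; similarly, since $y_2 \in LIM^r x_n$, there is $N_2 \in \mathbb{N}$ with $\|x_n - y_2\| < r + \varepsilon$ for all $n \ge N_2$. Setting $N := \max\{N_1, N_2\}$, the estimate above yields, for every $n \ge N$, that $\|x_n - y\| < \lambda(r+\varepsilon) + (1-\lambda)(r+\varepsilon) = r + \varepsilon$. Hence $x_n \xrightarrow{r} y$, so $y \in LIM^r x_n$, which establishes that $LIM^r x_n$ is convex.

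There is essentially no obstacle here: the statement reduces to the convexity of the norm (subadditivity plus homogeneity). The only points needing a little care are using a common $\varepsilon$ for both $y_1$ and $y_2$ — handled by passing to the maximum of the two thresholds $N_1, N_2$ — and noting that the weights $\lambda$ and $1-\lambda$ sum to $1$, which is precisely what collapses $\lambda(r+\varepsilon) + (1-\lambda)(r+\varepsilon)$ back to $r+\varepsilon$.
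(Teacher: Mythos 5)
Your proof is correct and follows the standard argument (the one in Phu's paper and the one this paper reuses for the rough $I^K$-limit set): decompose $x_n - y = \lambda(x_n - y_1) + (1-\lambda)(x_n - y_2)$, apply the triangle inequality and homogeneity of the norm, and take $N = \max\{N_1, N_2\}$. No issues.
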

\begin{defn}\cite{R7}
A sequence $\{x_n\}_{n\in\mathbb{N}}$ in a normed linear space $(X, ||\cdot||)$ is said to be $I$-bounded if there exists a positive real number $M$ such that the set $\{n\in\mathbb{N}: ||x_n||\geq M\}\in I$
\end{defn}
\begin{defn}\cite{R7}
Let $I$ be an admissible ideal and $r$ be a non-negative real number. A sequence $\{x_n\}_{n\in\mathbb{N}}$ in a normed linear space $(X, ||\cdot||)$ is said to be rough $I$-convergent of roughness degree $r$ to $x$, denoted by $x_n \xrightarrow { r-I} x$ provided that $\{n\in\mathbb{N}: ||x_n - x||\geq r +\varepsilon\}\in I$ for every $\varepsilon >0$ and $x$ is called rough $I$-limit of $\{x_n\}_{n\in\mathbb{N}}$ of roughness degree $r$.
\end{defn}
\begin{rem}
If $I$ is an admissible ideal, then the usual rough convergence implies rough $I$-convergence.
\end{rem}
\begin{note}
If we take $r=0$, then we obtain the definition of ordinary $I$-convergence. In general, the rough $I$-limit of a sequence may not be unique for the roughness degree $r>0$. So we have to consider the so-called rough $I$-limit set of a sequence $\{x_n\}_{n\in\mathbb{N}}$ which is defined by $I-LIM^r x_n :=\{x\in X: x_n\xrightarrow{r -I} x\}$. A sequence $\{x_n\}_{n\in\mathbb{N}}$ is said to be rough $I$-convergent if $I-LIM^r x_n \neq \phi$.
\end{note}
\begin{thm}\cite{R7}
Let $I\subset 2^\mathbb{N}$ be an admissible ideal. A sequence $\{x_n\}_{n\in\mathbb{N}}$ in $(X,||\cdot||)$ is $I$-bounded if and only if there exists a non negative real number $r$ such that $I-LIM^r x_n\neq \phi$.
\end{thm}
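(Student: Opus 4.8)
The plan is to prove both implications directly, using only the triangle inequality and the fact that an ideal is closed under taking subsets.

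For the forward direction, assume $\{x_n\}_{n\in\mathbb{N}}$ is $I$-bounded, so there is $M>0$ with $A=\{n\in\mathbb{N}:\|x_n\|\geq M\}\in I$. I would then show that the choice $r=M$ works, in fact that $0\in I\text{-}LIM^{M}x_n$. Given any $\varepsilon>0$, the set $\{n\in\mathbb{N}:\|x_n-0\|\geq M+\varepsilon\}=\{n\in\mathbb{N}:\|x_n\|\geq M+\varepsilon\}$ is contained in $A$, hence lies in $I$. Thus $0$ is a rough $I$-limit of roughness degree $M$, so $I\text{-}LIM^{r}x_n\neq\phi$ for $r=M$.

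For the converse, assume there is a non-negative real $r$ with $I\text{-}LIM^{r}x_n\neq\phi$, and fix some $x^{*}\in I\text{-}LIM^{r}x_n$. Applying the definition of rough $I$-convergence with $\varepsilon=1$ gives $B=\{n\in\mathbb{N}:\|x_n-x^{*}\|\geq r+1\}\in I$. Set $M=r+1+\|x^{*}\|$. If $\|x_n\|\geq M$, then by the triangle inequality $\|x_n-x^{*}\|\geq\|x_n\|-\|x^{*}\|\geq r+1$, so $\{n\in\mathbb{N}:\|x_n\|\geq M\}\subseteq B\in I$, whence $\{n\in\mathbb{N}:\|x_n\|\geq M\}\in I$. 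This shows $\{x_n\}_{n\in\mathbb{N}}$ is $I$-bounded.

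There is no real obstacle here; the only thing to be careful about is selecting the constants correctly (taking $\varepsilon=1$, or any fixed positive number, in the converse, and absorbing $\|x^{*}\|$ into the bound $M$ via the triangle inequality) and invoking closure of $I$ under subsets at each step. Admissibility of $I$ is not even needed for this particular equivalence, though it is assumed in the ambient setting.
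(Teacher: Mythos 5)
Your proof is correct and is essentially the standard argument for this result; the paper only cites the theorem from D\"undar and \c{C}akan without reproducing a proof, and the proof given there is the same one (take $x=0$ with $r=M$ for the forward direction, and use the triangle inequality with a fixed $\varepsilon$ to absorb $\|x^{*}\|$ into the bound for the converse). Your remark that only downward closure of the ideal is needed, not admissibility, is also accurate.
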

\begin{thm}\cite{R7}
Let $I\subset 2^\mathbb{N}$ be an admissible ideal. If $\{x_{m_k}\}_{k\in\mathbb{N}}$ is a sub sequence of $\{x_n\}_{n\in\mathbb{N}}$ in $(X, ||\cdot||)$ then $I-LIM^r x_n \subset I- LIM^r x_{m_k} $
\end{thm}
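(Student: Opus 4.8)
The plan is to mimic the proof of Phu's subsequence Proposition for ordinary rough convergence ($LIM^r x_n\subseteq LIM^r x_n'$), replacing the notion of ``eventually'' by ``on a set in the filter $F(I)$''. Write the subsequence as $\{x_{m_k}\}_{k\in\mathbb{N}}$ with $m_1<m_2<m_3<\cdots$ (so $m_k\ge k$ for all $k$) and put $M=\{m_k:k\in\mathbb{N}\}$. Let $y\in I\text{-}LIM^r x_n$; we must show $y\in I\text{-}LIM^r x_{m_k}$, i.e. that
\[B_\varepsilon:=\{k\in\mathbb{N}:\|x_{m_k}-y\|\ge r+\varepsilon\}\in I\quad\text{for every }\varepsilon>0.\]
From $y\in I\text{-}LIM^r x_n$ we already know $A_\varepsilon:=\{n\in\mathbb{N}:\|x_n-y\|\ge r+\varepsilon\}\in I$, and unravelling the definitions gives $B_\varepsilon=\{k\in\mathbb{N}:m_k\in A_\varepsilon\}$, so that $\{m_k:k\in B_\varepsilon\}=A_\varepsilon\cap M$.

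The first, easy, observation is that $A_\varepsilon\cap M\subseteq A_\varepsilon\in I$, so the image of $B_\varepsilon$ under the increasing enumeration $k\mapsto m_k$ is an $I$-set; if one is content to regard the subsequence as the family $(x_n)_{n\in M}$ indexed by its own original indices, this already finishes the argument by hereditariness of $I$. The substance of the proof is to transport this information back to the running index $k$. Here one uses the two structural facts available: the inequality $m_k\ge k$, which makes $k\mapsto m_k$ non-contracting, and the behaviour of $\mathbb{N}\setminus M$ with respect to $I$; combining these with $\mathbb{N}\setminus A_\varepsilon\in F(I)$ and the closure of $F(I)$ under finite intersections, one aims to show that $\{k:m_k\notin A_\varepsilon\}$ belongs to $F(I)$, equivalently $B_\varepsilon\in I$. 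Doing this for all $\varepsilon>0$ yields $y\in I\text{-}LIM^r x_{m_k}$, and since $y$ was an arbitrary point of $I\text{-}LIM^r x_n$ we obtain the inclusion $I\text{-}LIM^r x_n\subseteq I\text{-}LIM^r x_{m_k}$.

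The step I expect to be the crux — and the one requiring genuine care — is exactly this transfer through the enumeration. It is tempting to claim $B_\varepsilon\subseteq A_\varepsilon$, but that is false in general: $k\mapsto m_k$ compresses the indices, so $m_k\in A_\varepsilon$ does not force $k\in A_\varepsilon$. Consequently one cannot simply quote hereditariness applied to $A_\varepsilon$; the membership $B_\varepsilon\in I$ has to be extracted from the position of $A_\varepsilon\cap M$ inside $M$, and this is where the inequality $m_k\ge k$ together with the relation of $M$ to the ideal $I$ must be used in an essential way. Once the correct member of $I$ containing $B_\varepsilon$ is identified, the remainder — checking membership, letting $\varepsilon$ vary, and unwinding the definition of the rough $I$-limit set — is routine.
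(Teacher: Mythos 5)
You have correctly isolated the crux --- passing from $A_\varepsilon\cap M\in I$ to $B_\varepsilon=\{k\in\mathbb{N}: m_k\in A_\varepsilon\}\in I$ --- but your proposal never actually carries that step out: it only announces that one ``aims to show'' it using $m_k\ge k$ and the closure properties of $F(I)$. This is a genuine gap, and it cannot be closed, because the increasing enumeration $k\mapsto m_k$ of $M$ does not transport membership in $I$ back to the running index. Concretely, take $I=I_d$ (the density-zero ideal), $X=\mathbb{R}$ with the usual norm, $r=1$, and
\[
x_n=\begin{cases} -1, & n=k^2 \text{ for some } k\in\mathbb{N},\\ \tfrac{1}{n}, & \text{otherwise.}\end{cases}
\]
For every $\varepsilon>0$ the set $\{n:|x_n-1|\ge 1+\varepsilon\}$ is contained in the set of perfect squares, which has density zero, so $1\in I\text{-}LIM^{1}x_n$. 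But for the subsequence given by $m_k=k^2$ one has $|x_{m_k}-1|=2$ for every $k$, so for $0<\varepsilon\le 1$ the set $B_\varepsilon$ is all of $\mathbb{N}\notin I_d$, and hence $1\notin I\text{-}LIM^{1}x_{m_k}$. Here $A_\varepsilon\cap M$ equals the whole of $M$, so its preimage under the enumeration is $\mathbb{N}$: the inequality $m_k\ge k$ gives no control on $\{k:m_k\in A_\varepsilon\}$ when $M$ itself lies in $I$.

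So the statement, as quoted from D\"undar--\c{C}akan, is false for general admissible ideals. It does hold for $I=\mathrm{Fin}$, where it reduces to Phu's proposition ($B_\varepsilon$ is finite if and only if $A_\varepsilon\cap M$ is, the enumeration being a bijection between them), and it holds in the reformulation you call the ``easy observation,'' where the subsequence is kept indexed by its original indices so that only the hereditary property of $I$ is needed. Note that the paper itself supplies essentially the counterexample above a few pages later (the example with $x_n=-1$ on the squares, used there to show the failure of the analogous inclusion for rough $I^*$-limit sets); the same sequence already refutes the rough $I$-limit version being cited. Your instinct that the transfer through the enumeration is where ``genuine care'' is required was exactly right; the care reveals that no proof exists.
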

\begin{thm}\cite{R7}
Let $I\subset 2^\mathbb{N}$ be an admissible ideal. The rough $I$-limit set of a sequence $\{x_n\}_{n\in\mathbb{N}}$ in $(X, ||\cdot||)$ is closed.
\end{thm}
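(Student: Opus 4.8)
The plan is to prove closedness by showing that $I-LIM^r x_n$ contains all of its limit points. If $I-LIM^r x_n=\phi$ there is nothing to prove, so assume the set is non-empty and let $y$ be a limit point of $I-LIM^r x_n$. The goal is then to verify that $y\in I-LIM^r x_n$, i.e. that $\{n\in\mathbb{N}: ||x_n-y||\geq r+\varepsilon\}\in I$ for every $\varepsilon>0$.

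Fix $\varepsilon>0$. Since $y$ is a limit point of $I-LIM^r x_n$, there exists a point $z\in I-LIM^r x_n$ with $||y-z||<\varepsilon/2$. Because $z$ is a rough $I$-limit of $\{x_n\}_{n\in\mathbb{N}}$ of degree $r$, the set $A:=\{n\in\mathbb{N}: ||x_n-z||\geq r+\varepsilon/2\}$ belongs to $I$. The triangle inequality supplies the key containment: if $||x_n-y||\geq r+\varepsilon$, then $||x_n-z||\geq ||x_n-y||-||y-z||> r+\varepsilon-\varepsilon/2=r+\varepsilon/2$, so $\{n\in\mathbb{N}: ||x_n-y||\geq r+\varepsilon\}\subseteq A$.

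Since $I$ is an ideal, it is hereditary with respect to set inclusion, and therefore $\{n\in\mathbb{N}: ||x_n-y||\geq r+\varepsilon\}\in I$. As $\varepsilon>0$ was arbitrary, $y\in I-LIM^r x_n$, so the set is closed. I do not expect a genuine obstacle here: the only ingredients are the triangle inequality together with the downward closure of $I$ under inclusion. Equivalently, one could pick a sequence $\{y_k\}_{k\in\mathbb{N}}$ in $I-LIM^r x_n$ with $y_k\to y$ and run the same estimate with $y_k$ in place of $z$ for $k$ large enough that $||y_k-y||<\varepsilon/2$; the argument is identical.
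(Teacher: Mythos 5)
Your proof is correct and matches the standard argument: the paper quotes this theorem from D\"undar and \c{C}akan without reproducing a proof, but its own proof of the analogous closedness result for the rough $I^K$-limit set at the end of Section 3 uses exactly your strategy (approximate $y$ by a point $z$ of the limit set within $\varepsilon/2$, apply the triangle inequality to get the containment of exceptional sets, then invoke the hereditary property of the ideal). No gaps; the $\varepsilon/2$ bookkeeping and the downward closure of $I$ are all that is needed.
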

\begin{thm}\cite{R7}
Let $I\subset 2^\mathbb{N}$ be an admissible ideal. The rough $I$-limit set of a sequence $\{x_n\}_{n\in\mathbb{N}}$ in $(X, ||\cdot||)$ is convex.
\end{thm}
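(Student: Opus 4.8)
The plan is to argue directly from the definition of the rough $I$-limit set. Fix $y_0,y_1\in I-LIM^r x_n$ and a scalar $\lambda\in[0,1]$, put $y_\lambda=(1-\lambda)y_0+\lambda y_1$, and show that $y_\lambda\in I-LIM^r x_n$; that is, for every $\varepsilon>0$ the set
\[
A_\lambda(\varepsilon)=\{n\in\mathbb{N}: ||x_n-y_\lambda||\geq r+\varepsilon\}
\]
belongs to $I$. The boundary cases $\lambda=0$ and $\lambda=1$ are trivial, since then $y_\lambda$ is one of the two given points.

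The key estimate is just the convexity of the norm. Writing $x_n-y_\lambda=(1-\lambda)(x_n-y_0)+\lambda(x_n-y_1)$ and applying the triangle inequality gives
\[
||x_n-y_\lambda||\leq(1-\lambda)\,||x_n-y_0||+\lambda\,||x_n-y_1||.
\]
Hence, if both $||x_n-y_0||<r+\varepsilon$ and $||x_n-y_1||<r+\varepsilon$, then $||x_n-y_\lambda||<(1-\lambda)(r+\varepsilon)+\lambda(r+\varepsilon)=r+\varepsilon$. Taking contrapositives, $A_\lambda(\varepsilon)\subseteq A_0(\varepsilon)\cup A_1(\varepsilon)$, where $A_i(\varepsilon)=\{n\in\mathbb{N}: ||x_n-y_i||\geq r+\varepsilon\}$ for $i=0,1$.

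Finally, since $y_0,y_1\in I-LIM^r x_n$, by definition $A_0(\varepsilon),A_1(\varepsilon)\in I$ for every $\varepsilon>0$. By the first ideal axiom $A_0(\varepsilon)\cup A_1(\varepsilon)\in I$, and by the second ideal axiom its subset $A_\lambda(\varepsilon)$ also lies in $I$. As $\varepsilon>0$ was arbitrary, $y_\lambda\in I-LIM^r x_n$, so the set is convex.

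I do not anticipate any real obstacle: the whole argument is the one-line norm-convexity estimate combined with the two closure properties of an ideal. The only things to watch are that the bound $||x_n-y_\lambda||\leq(1-\lambda)||x_n-y_0||+\lambda||x_n-y_1||$ holds uniformly in $n$ (so a single $\varepsilon$ simultaneously controls all three sets) and that admissibility of $I$ is not actually used beyond $I$ being a nontrivial ideal.
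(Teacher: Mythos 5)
Your proof is correct and follows exactly the standard argument: the paper cites this result from D\"undar and \c{C}akan without reproducing a proof, but the analogous convexity proof it does give (for the rough $I^K$-limit set) uses the very same decomposition $x_n-y_\lambda=(1-\lambda)(x_n-y_0)+\lambda(x_n-y_1)$, the triangle-inequality estimate, and the two ideal axioms. Nothing is missing; your closing remarks about uniformity in $n$ and about admissibility being unnecessary are also accurate.
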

We now give some basic ideas on $I^K$-convergence in a topological space studied by M. Ma\v{c}aj and M. Sleziak \cite{4th2}.
\begin{defn}\cite{4th2}
Let $I$ be an ideal on a set $S$ and $X$ be a topological space. A function $f: S\mapsto X$ is said to be $I$-convergent to $x$ if $f^{-1} (U)=\{s\in S: f(s)\in U\}\in F(I)$ holds for every neighbourhood $U$ of the point $x$ and we write $I-lim f=x$
\end{defn}
If $S=\mathbb{N}$, then we obtain the usual definition of $I$-convergence of sequences. In this case the notation $I-lim x_n=x$ is used.
\begin{defn}\cite{4th2}
Let $I$ be an ideal on a set $S$ and let $f:S\mapsto X$ be a function to a topological space $X$. The function $f$ is called $I^*$-convergent to the point $x$ if there exists a set $M\in F(I)$ such that the function $g:S\mapsto X$ defined by $g(s)=\begin{cases}
f(s), \:\text{if} s\in M\\
x, \:\text{if} s\notin M
\end{cases}$ is Fin-convergent to $x$. If $f$ is $I^*$-convergent to $x$, then we write $I^*-lim f =x$.
\end{defn}
The usual notion of $I^*$-convergence of sequences is a special case when $S=\mathbb{N}$.
\begin{defn}\label{eqidef}\cite{4th2}
Let $K$ and $I$ be ideals on a set $S$ and $X$ be a topological space. The function $f:S\mapsto X$ is said to be $I^K$-convergent to $x\in X$ if there exists a set $M\in F(I)$ such that the function $g:S \mapsto X$ given by $g(s)=\begin{cases}
f(s), \text{if} \:\:s\in M\\
x, \text{if} \:\:s\notin M
\end{cases}$ is $K$-convergent to $x$. If $f$ is $I^K$-convergent to $x$, then we write $I^K-lim f =x$.
\end{defn}
When $S=\mathbb{N}$, then we speak about $I^K$-convergence of sequences.
\begin{rem}
The definition of $I^K$-convergence may also be treated from \cite{4th4} as follows: there exists $M\in F(I)$ such that the function $f|_M$ is $K|M$-convergent to $x$, where $K|M=\{A\cap M: A\in K\}$ is the trace of $K$ on $M$. The two definitions are equivalent but the definition given in \ref{eqidef} is somewhat simpler \cite{4th2}.
\end{rem}
\begin{lem}\cite{4th2}
If $I$ and $K$ are ideals on a set $S$ and $f: S\mapsto X$ is a function such that $K-lim f=x$ then $I^K-lim f=x$.
\end{lem}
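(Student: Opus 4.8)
The plan is to prove the statement directly from the definition of $I^K$-convergence (Definition \ref{eqidef}) by exhibiting the simplest possible witness set, namely $M = S$ itself.

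First I would observe that $S \in F(I)$ for any nontrivial ideal $I$ on $S$: by Lemma (the one describing $F(I)$), since $\emptyset \in I$ we have $S = S \setminus \emptyset \in F(I)$. So $M = S$ is a legitimate choice of a set in $F(I)$. Next I would form the auxiliary function $g : S \mapsto X$ associated with this $M$ as in Definition \ref{eqidef}, i.e. $g(s) = f(s)$ for $s \in M$ and $g(s) = x$ for $s \notin M$. Because $M = S$, the second clause is vacuous, and therefore $g(s) = f(s)$ for every $s \in S$; that is, $g = f$ as functions.

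Now, by hypothesis $K\text{-}\lim f = x$, so $g = f$ is $K$-convergent to $x$. Since we have produced a set $M \in F(I)$ for which the corresponding function $g$ is $K$-convergent to $x$, Definition \ref{eqidef} gives precisely $I^K\text{-}\lim f = x$, which is what we wanted.

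I do not expect any genuine obstacle here; the argument is essentially a bookkeeping check that the defining clause of $I^K$-convergence is satisfied with the trivial witness. The only point that must not be overlooked is the verification that $S \in F(I)$, which relies on $\emptyset \in I$ (equivalently on $I$ being an ideal); once that is in hand the conclusion is immediate.
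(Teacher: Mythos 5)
Your proof is correct: the paper only cites this lemma from Ma\v{c}aj and Sleziak without reproducing a proof, but your choice of the trivial witness $M=S$ (justified by $\emptyset\in I$, hence $S\in F(I)$, so that $g=f$) is exactly the standard argument, and it is the same device the authors themselves use in their proof of the rough analogue (Lemma \ref{lemma 3.5}, where they take $M=\mathbb{N}$ and note $K|M=K$). No gaps.
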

\begin{defn}\cite{4th2}
Let $K$ be an ideal on a set $S$. we write $A\subset_K B$ whenever $A\setminus B\in K$. if $A\subset_K B$ and $B\subset_K A$ then we write $A \sim_K B$.
\end{defn}
Clearly $A\sim_K B \Leftrightarrow A\Delta B\in K$.\\
Now we recall a lemma from \cite{4th2} where several equivalent formulations of a condition for ideals $I$ and $K$ have been described.
\begin{lem}\label{apk}\cite{4th2}
Let $I$ and $K$ be ideals on the same set $S$. Then the following conditions are equivalent:\\
$(i)$ For every sequence $\{A_n\}_{n\in\mathbb{N}}$ of sets from $I$ there is $A\in I$ such that $A_n\sim _K A$ for all $n$'s.\\
$(ii)$ Any sequence $\{F_n\}_{n\in\mathbb{N}}$ of sets from $F(I)$ has a $K$-pseudo intersection in $F(I)$.\\
$(iii)$ For every sequence $\{A_n\}_{n\in\mathbb{N}}$ of sets belonging to $I$ there exists a sequence $\{B_n\}_{n\in\mathbb{N}}$ of sets from $I$ such that $A_j \sim_K B_j$ for $j\in \mathbb{N}$ and $B=\bigcup _{j\in\mathbb{N}} B_j \in I$.\\
$(iv)$ For every sequence of mutually disjoint sets $\{A_n\}_{n\in\mathbb{N}}$ belonging to $I$ there exists a sequence $\{B_n\}_{n\in\mathbb{N}}$ of sets belonging to $I$ such that $A_j\sim_K B_j$ for $j\in\mathbb{N}$ and $B=\bigcup _{j\in\mathbb{N}} B_j \in I$.\\
$(v)$ For every non-decreasing sequence $A_1\subset A_2 \subset \cdots \subset A_n\subset \cdots$ of sets from $I$ there exists a sequence $\{B_n\}_{n\in\mathbb{N}}$ of sets belonging to $I$ such that $A_j \sim_K B_j$ for $j\in\mathbb{N}$ and $B=\bigcup_{j\in\mathbb{N}} B_j\in I$.

\end{lem}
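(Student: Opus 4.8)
The plan is to prove the cycle $(i)\Rightarrow(iii)\Rightarrow(iv)\Rightarrow(v)\Rightarrow(i)$ and, separately, the duality $(i)\Leftrightarrow(ii)$ obtained by passing to complements. Throughout I would use only three elementary facts: that $I$ and $K$ are closed under finite unions and under subsets; that $\bigl(\bigcup_j X_j\bigr)\Delta\bigl(\bigcup_j Y_j\bigr)\subseteq\bigcup_j(X_j\Delta Y_j)$ for arbitrary families of sets; and that $B_n\subseteq B$ forces $A_n\setminus B\subseteq A_n\setminus B_n\subseteq A_n\Delta B_n$. It is convenient to read condition $(i)$ in its one-sided form, namely that there is $A\in I$ with $A_n\setminus A\in K$ for every $n$; this is exactly the statement dual to $(ii)$. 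Indeed, for $(i)\Leftrightarrow(ii)$: given $\{F_n\}\subseteq F(I)$, put $A_n:=S\setminus F_n\in I$; a set $A\in I$ satisfies $A_n\setminus A\in K$ for all $n$ precisely when $F:=S\setminus A\in F(I)$ satisfies $F\setminus F_n=A_n\setminus A\in K$ for all $n$, i.e.\ when $F$ is a $K$-pseudo-intersection of $\{F_n\}$ lying in $F(I)$, and this correspondence is a bijection between the two sides.

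For $(i)\Rightarrow(iii)$, take $A\in I$ as in $(i)$ and set $B_n:=A_n\cap A$. Then $B_n\in I$, $A_n\Delta B_n=A_n\setminus A\in K$, and $\bigcup_j B_j=\bigl(\bigcup_j A_j\bigr)\cap A\subseteq A\in I$. The implication $(iii)\Rightarrow(iv)$ is immediate, since a sequence of mutually disjoint sets is in particular a sequence. For $(iv)\Rightarrow(v)$, given a non-decreasing sequence $A_1\subseteq A_2\subseteq\cdots$ in $I$, disjointify it by $C_1:=A_1$ and $C_n:=A_n\setminus A_{n-1}$ for $n\geq2$; the $C_n$ are mutually disjoint, belong to $I$, and satisfy $A_n=\bigcup_{j\leq n}C_j$. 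Apply $(iv)$ to obtain $\{D_n\}\subseteq I$ with $C_j\Delta D_j\in K$ for all $j$ and $\bigcup_j D_j\in I$, and put $B_n:=\bigcup_{j\leq n}D_j\in I$. Then $\bigcup_n B_n=\bigcup_n D_n\in I$ and $A_n\Delta B_n\subseteq\bigcup_{j\leq n}(C_j\Delta D_j)\in K$, which is what $(v)$ demands.

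For $(v)\Rightarrow(i)$, given an arbitrary $\{A_n\}\subseteq I$, pass to the non-decreasing sequence of partial unions $A'_n:=\bigcup_{j\leq n}A_j\in I$ and apply $(v)$ to get $\{B_n\}\subseteq I$ with $A'_n\Delta B_n\in K$ and $B:=\bigcup_n B_n\in I$. Then $A:=B$ witnesses $(i)$: since $A_n\subseteq A'_n$ and $B_n\subseteq B$, we have $A_n\setminus B\subseteq A'_n\setminus B_n\subseteq A'_n\Delta B_n\in K$.

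The structurally essential moves are just the complementation duality for $(i)\Leftrightarrow(ii)$, the disjointification in $(iv)\Rightarrow(v)$, and the passage to partial unions in $(v)\Rightarrow(i)$; everything else is bookkeeping. The point that most needs care is keeping track of which side of $\subset_K$ and $\sim_K$ survives each operation, and checking at every step that the symmetric differences forced into $K$ are finite unions of members of $K$ while the unions $\bigcup_j B_j$ forced into $I$ are contained in a single member of $I$ — without this, neither $K$ nor $I$ would absorb them, since only finite unions are at our disposal.
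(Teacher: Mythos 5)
The paper does not prove this lemma at all: it is quoted verbatim from Ma\v{c}aj and Sleziak \cite{4th2}, so there is no internal proof to compare against. Your argument --- the cycle $(i)\Rightarrow(iii)\Rightarrow(iv)\Rightarrow(v)\Rightarrow(i)$ via intersecting with the witness $A$, disjointification, and partial unions, together with the complementation duality $(i)\Leftrightarrow(ii)$ --- is correct in every step and is essentially the standard proof from that source. Each symmetric difference you place in $K$ is a finite union of members of $K$, and each union you place in $I$ is contained in a single member of $I$, exactly as your closing remark demands.

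One point deserves to be made explicit rather than passed off as a convenience. Condition $(i)$ as printed in the statement asks for $A_n\sim_K A$, i.e.\ the two-sided relation $A_n\Delta A\in K$, and in that form the lemma is simply false: take $K=\mathrm{Fin}$ and $I$ the density-zero ideal (which satisfies the other conditions), and let $A_1,A_2$ be disjoint infinite sets in $I$; then $A\setminus A_1$ and $A\setminus A_2$ both finite force $A$ to be almost contained in $A_1\cap A_2=\emptyset$, hence finite, and then $A_1\setminus A\in\mathrm{Fin}$ forces $A_1$ finite, a contradiction. The original formulation in \cite{4th2} is the one-sided $A_n\subset_K A$, which is what you actually prove, and your implication $(v)\Rightarrow(i)$ indeed only yields $A_n\setminus B\in K$, not $B\setminus A_n\in K$. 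So your silent correction is not optional but necessary; state it as a correction of a misprint in the transcribed statement, and the proof is complete.
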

\begin{defn}\cite{4th2}
Let $I$ and $K$ be two ideals on a same set $S$. We say that $I$ has the additive property with respect to $K$, more briefly that AP$(I, K)$ holds, if any one of the equivalent conditions of Lemma \ref{apk} holds.
\end{defn}
The condition (AP) from \cite{h}, is equivalent to the condition AP($I$, Fin). Now we recall the following two theorems from \cite{4th2}.
\begin{thm}\cite{4th2}
Let $I$ and $K$ be ideals on a set $S$ and $X$ be a first countable topological space. If $I$ has the additive property with respect to $K$, then for any function $f: S \mapsto X$ $I$ convergence implies the $I^K$-convergence or in other words, if the condition AP$(I, K)$ holds then the $I$-convergence implies the $I^K$-convergence.
\end{thm}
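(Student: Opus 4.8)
The plan is to start from an $I$-convergent function $f\colon S\to X$, say $I\text{-}\lim f = x$, and to manufacture the set $M\in F(I)$ demanded by the definition of $I^K$-convergence. First I would use first countability of $X$ to fix a \emph{decreasing} countable base $\{U_n\}_{n\in\mathbb{N}}$ of neighbourhoods of $x$, i.e. $U_1\supseteq U_2\supseteq\cdots$; such a base is obtained from an arbitrary countable base by replacing its $n$-th member with the intersection of the first $n$. Putting $A_n := \{s\in S : f(s)\notin U_n\} = S\setminus f^{-1}(U_n)$, the hypothesis $f^{-1}(U_n)\in F(I)$ gives $A_n\in I$ for every $n$, and the nesting of the $U_n$'s makes $\{A_n\}_{n\in\mathbb{N}}$ a non-decreasing sequence of sets from $I$.

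Next I would invoke AP$(I,K)$. Applying condition $(v)$ of Lemma \ref{apk} to the non-decreasing family $\{A_n\}$ produces a sequence $\{B_n\}_{n\in\mathbb{N}}$ of sets from $I$ with $A_j\sim_K B_j$ for every $j$ and $B:=\bigcup_{j\in\mathbb{N}} B_j\in I$. (Condition $(i)$ would work equally well and would not even require the monotonicity of $\{A_n\}$; I keep $(v)$ only because it matches the sequence already built.) Set $M := S\setminus B$. Since $B\in I$ we have $M\in F(I)$, so $M$ is the candidate witnessing set; define $g\colon S\to X$ by $g(s)=f(s)$ for $s\in M$ and $g(s)=x$ for $s\notin M$, exactly as in the definition of $I^K$-convergence.

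It then remains to verify that $g$ is $K$-convergent to $x$. Given any neighbourhood $U$ of $x$, pick $n$ with $U_n\subseteq U$. Since $g(s)=x\in U_n$ for $s\notin M$, we get $\{s\in S : g(s)\notin U_n\} = \{s\in M : f(s)\notin U_n\} = A_n\setminus B \subseteq A_n\setminus B_n \subseteq A_n\Delta B_n$, and $A_n\Delta B_n\in K$ because $A_n\sim_K B_n$. Hence $\{s : g(s)\notin U_n\}\in K$, and as $\{s : g(s)\notin U\}\subseteq\{s : g(s)\notin U_n\}$ the same membership holds for $U$. Thus $g^{-1}(U)\in F(K)$ for every neighbourhood $U$ of $x$, that is $K\text{-}\lim g = x$, and by the definition of $I^K$-convergence this says precisely $I^K\text{-}\lim f = x$.

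I expect the only genuine subtlety to be the reduction to a single monotone sequence of ideal sets: passing from an arbitrary countable neighbourhood base to a decreasing one, so that Lemma \ref{apk} is applied to one non-decreasing family rather than to countably many unrelated sets. Once that bookkeeping is set up, the remaining verification that $g$ is $K$-convergent is a routine set-theoretic computation using $A_n\sim_K B_n$ together with the hereditariness of the ideal $K$. Note that no admissibility of $I$ or $K$ is needed here, only non-triviality, so that $F(I)$ and $F(K)$ are genuine filters.
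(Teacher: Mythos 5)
Your proof is correct, and it is essentially the argument behind the cited result: the paper itself only quotes this theorem from \cite{4th2} without proof, and the standard proof there proceeds exactly as you do — a decreasing neighbourhood base gives a monotone family $A_n\in I$, AP$(I,K)$ supplies the set $M\in F(I)$, and the verification that $g$ is $K$-convergent is the set-theoretic computation $\{s: g(s)\notin U_n\}=A_n\setminus B\subseteq A_n\Delta B_n\in K$. The only cosmetic difference is that the original invokes condition $(i)$ of Lemma \ref{apk} (a single $A\in I$ with $A_n\sim_K A$ for all $n$, taking $M=S\setminus A$), whereas you use condition $(v)$; the two are equivalent by that lemma, so nothing is lost.
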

Let us recall that a topological space $X$ is called finitely generated space or Alexandroff space if intersection of any number of open sets of $X$ is again an open set (see \cite{4th1}). Equivalently, $X$ is finitely generated if and only if each point $x$ has a smallest neighbourhood.
\begin{thm}\cite{4th2}
Let $I$, $K$ be ideals on a set $S$ and $X$ be a first countable topological space which is not finitely generated. If the $I$-convergence implies the $I^K$-convergence for any function $f: S\mapsto X$, then the ideal $I$ has the additive property with respect to $K$ or briefly the condition AP$(I,K)$ holds.
\end{thm}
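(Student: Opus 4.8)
The plan is to establish $\mathrm{AP}(I,K)$ by verifying condition $(v)$ of Lemma \ref{apk}: starting from an arbitrary non-decreasing chain $A_1\subseteq A_2\subseteq\cdots$ of sets in $I$, I will produce sets $B_j\in I$ with $A_j\sim_K B_j$ for all $j$ and $\bigcup_j B_j\in I$. The mechanism is to encode the chain into a single function $f:S\to X$ whose $I$-convergence to a suitable point is forced by the construction, so that the hypothesis hands us $I^K$-convergence, and then to read the sets $B_j$ off the witnessing set $M\in F(I)$.

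First I would extract the needed topology. Since $X$ is not finitely generated, there is a point $x\in X$ with no smallest neighbourhood; as $X$ is first countable, $x$ has a countable open neighbourhood base which, after intersecting successive members and passing to a subsequence, may be taken strictly decreasing $U_1\supsetneq U_2\supsetneq\cdots$ (a non-increasing base that stabilised would make $\bigcap_n U_n$ a neighbourhood of $x$, contradicting the absence of a smallest one). Then I pick points $x_n\in U_n\setminus U_{n+1}$; the key property is that $x_n\notin U_m$ whenever $m>n$, while $x\in U_m$ for every $m$.

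Next, with the convention $A_0=\emptyset$, define $f:S\to X$ by $f(s)=x_n$ if $s\in A_n\setminus A_{n-1}$ and $f(s)=x$ if $s\notin\bigcup_n A_n$. For any neighbourhood $U$ of $x$ choose $m$ with $U_m\subseteq U$; then $\{s:f(s)\notin U\}\subseteq\{s:f(s)\notin U_m\}=\bigcup_{n<m}(A_n\setminus A_{n-1})=A_{m-1}\in I$, so $f^{-1}(U)\in F(I)$ and $f$ is $I$-convergent to $x$. By the hypothesis, $f$ is then $I^K$-convergent to $x$, so there is $M\in F(I)$ for which the associated function $g$ (equal to $f$ on $M$ and to $x$ off $M$) is $K$-convergent to $x$. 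Taking $U=U_m$ yields $\{s:g(s)\notin U_m\}=M\cap A_{m-1}\in K$, hence $M\cap A_j\in K$ for every $j$.

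Finally I would set $B_j:=A_j\setminus M$. Because $S\setminus M\in I$, each $B_j\in I$ and $\bigcup_j B_j\subseteq S\setminus M\in I$; moreover $A_j\Delta B_j=A_j\cap M\in K$, that is $A_j\sim_K B_j$. This is precisely condition $(v)$ of Lemma \ref{apk}, so $\mathrm{AP}(I,K)$ holds. I expect the only delicate step to be the topological extraction in the second paragraph — converting ``not finitely generated'' together with first countability into a strictly decreasing base with the separating points $x_n$ — since that is exactly where the non-finitely-generated hypothesis is consumed; the subsequent manipulations with preimages and symmetric differences are routine bookkeeping.
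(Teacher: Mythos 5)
Your proof is correct and follows essentially the same route as the original argument of Ma\v{c}aj and Sleziak in \cite{4th2} (the paper itself only cites the result): pick a point with no smallest neighbourhood, build a strictly decreasing neighbourhood base with separating points $x_n$, encode the family of sets from $I$ into a function that is $I$-convergent to $x$, and read the $B_j$ off the witnessing set $M$. The only cosmetic difference is that you verify clause $(v)$ of Lemma \ref{apk} via the disjoint layers $A_n\setminus A_{n-1}$ of a non-decreasing chain, where the original works directly with clause $(iv)$ for pairwise disjoint sets; the two are interchangeable here.
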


\section{Main results}
Through out our discussion  $(X, ||\cdot||)$ or simply $X$ will always denote a normed linear space over the field $\mathbb{C}$ or $\mathbb{R}$ and $I$,$K$ always assumed to be non trivial admissible ideals on $\mathbb{N}$ unless otherwise stated.
\begin{defn}\label{def1}
Let $r$ be a non-negative real number and $I$ be a non trivial admissible ideal on $\mathbb{N}$. Then a sequence $\{x_n\}_{n\in \mathbb{N}}$ in $(X, || \cdot || )$ is said to be rough $I^*$-convergent of roughness degree $r$ to $x$ if there exists a set $M = \{m_1 < m_2< m_3 < \cdots < m_k < \cdots\}$ in $F(I)$ such that the sub sequence $\{x_{m_k}\}_{k\in \mathbb{N}}$ is rough convergent of roughness degree $r$ to $x$. Thus for any $\varepsilon >0$ there exists a $N\in \mathbb{N}$ such that $|| x_{m_k} - x || < r + \varepsilon $ for all $k \geq N$. we denote this by $x_n \xrightarrow{r-I^*} x$.
\end{defn}
Here $x$ is called the rough $I^*$-limit of the sequence $\{x_n\}_{n\in \mathbb{N}}$ of roughness degree $r$. For $r=0$ we have the definition of $I^*$-convergence of sequences in normed linear spaces. Obviously rough $I^*$-limit of a sequence in normed linear spaces is not unique. Therefore we have to consider the rough $I^*$-limit set of the sequence $\{x_n\}_{n\in\mathbb{N}}$ defined as follows: $I^*-LIM^r x_n = \{x\in X :x_n \xrightarrow{r - I^*} x\}$. 
\begin{defn}\label{def2}
Let $r$ be a non-negative real number. Also let $I$ and $K$ be two non trivial admissible ideals on $\mathbb{N}$. Then a sequence $\{x_n\}_{n\in \mathbb{N}}$ in a normed linear space $(X, || \cdot || )$ is said to be rough $I^K$-convergent of roughness degree $r$ to $x$ if there exists a set $M=\{m_1< m_2<\cdots <m_k<\cdots\}$ in $F(I)$ such that the sub sequence $\{x_{m_k}\}_{k\in \mathbb{N}}$ is rough $K|M$-convergent of roughness degree $r$ to $x$, where $K|M=\{A\cap M: A\in K\}$ is the trace of $K$ on $M$. That is for any $\varepsilon >0$, the set $\{k\in \mathbb{N} : || x_{m_k} - x || \geq r +\varepsilon\}\in K|M$. we denote this by $x_n \xrightarrow{r-I^K} x$.
\end{defn}
Here $x$ is called the rough $I^K$-limit of the sequence $\{x_n\}_{n\in\mathbb{N}}$ of roughness degree $r$. For $r=0$ we have the definition of $I^K$-convergent of sequences in normed linear spaces. It should be noted that for $M\in F(I)$, the trace $K|M=\{A\cap M: A\in K\}$ of $K$ on $M$ also forms an ideal on $\mathbb{N}$. Clearly rough $I^K$-limit of a sequence in normed linear spaces is not unique. Therefore we will consider the rough $I^K$-limit set of the sequence $\{x_n\}_{n\in \mathbb{N}}$ defined by $I^K-LIM^r x_n =\{x\in X : x_n \xrightarrow{r-I^K} x\}$. If the ideal $K$ is such that it is the class of all finite subsets of $\mathbb{N}$ then definition \ref{def1} and definition \ref{def2} coincides. Obviously if $x$ is a rough $I^*$-limit of a sequence $\{x_n\}_{n\in\mathbb{N}}$ then $x$ is also a rough $I^K$-limit of $\{x_n\}_{n\in\mathbb{N}}$. But it may happen that $x$ is rough $I^K$-limit of a sequence $\{x_n\}_{n\in\mathbb{N}}$ in normed linear space without being rough $I^*$-limit of the the sequence $\{x_n\}$, which is seen from the next example. So, in general, for a sequence $\{x_n\}_{n\in\mathbb{N}}$ in a normed linear space and for any non-negative real number $r$, we have $I^* -LIM^r x_n \subset I^K-LIM^r x_n$. 
\begin{exmp}
 Let us consider a decomposition of $\mathbb{N}$ by $\mathbb{N}= A\cup\displaystyle{\bigcup_{i=1} ^\infty} A_i$, where $A=\{1,3,5,\cdots\}$ and $A_i= \{2^n (2i -1): n\in \mathbb{N}\}$. Then each of $A_i$'s are disjoint from each other and each of $A_i$'s are disjoint from $A$ also. Let $I$ be the collections of all those subsets of $\mathbb{N}$ such that the sets which belongs to $I$ can intersects with $A$ and  with only a finite numbers of $A_i$'s. Then $I$ is an non trivial admissible ideal on $\mathbb{N}$. Let $\mathbb{N}= \displaystyle{\bigcup_{j=1} ^ \infty} D_j$ be another decomposition of $\mathbb{N}$ such that $D_j=\{2^{j-1}(2s -1): s=1, 2, \cdots\}$. Then each of $D_j$ is infinite and $D_j \cap D_k =\phi$ for $j\neq k$. Let $K$ be the ideal of all those subsets of $\mathbb{N}$ which intersects with only a finite numbers of $D_j$'s. Then $K$ is a non trivial admissible ideal on $\mathbb{N}$. Let us consider the sequence in real number space with usual norm define by $x_n =\frac{1}{j}$ if $n\in D_j$. Let us take $M=\mathbb{N}\in F(I)$. Then $K|M=K$. Now let $r >0$ be arbitrary. Since by Archimedean property for any arbitrary $\varepsilon >0$ there exists a $l\in \mathbb{N}$ such that $\varepsilon>\frac{1}{l}$. So $\{k\in \mathbb{N} : | x_k - (-r)|=|x_k + r|\geq r + \varepsilon\}\subset D_1\cup D_2\cup\cdots\cup D_l\in K=K|M$. Therefore $-r\in I^K-LIM^r x_n$.\\
If possible let $- r\in I^*-LIM^r x_n$. So there exists a set $M=\{m_1<m_2<\cdots <m_k<\cdots\}\in F(I)$ for which the sub sequence $\{x_n\}_{n\in M}$ of the sequence $\{x_n\}_{n\in\mathbb{N}}$ is rough convergent to $x$ of roughness degree $r$. Now as $M\in F(I)$ therefore we have $\mathbb{N}\setminus M = H \:\text{(say)}\:\in I$. Therefore exists a $p\in \mathbb{N}$ such that $H\subset A\cup A_1 \cup A_2 \cup\cdots\cup A_p$ and so $A_{k}\subset M$ for all $k\geq {p+1}$. Now as each of the set $A_k$'s contains an element from each of the set $D_i$'s for $i\geq 2$, so there exists a $s\in\mathbb{N}$ such that $x_{m_k}=\frac{1}{s}$ for infinitely $k$'s when $m_k\in D_s$. As $- r \in I^*-LIM^r x_n$, so for  $\varepsilon=\frac{1}{s+1}$ there exists a $N\in\mathbb{N}$ such that $|x_{m_k} - (- r)|=|x_{m_k} + r|< r +\varepsilon$ for all $k\geq N \rightarrow (i)$. Since $x_{m_k}=\frac{1}{s}$ for infinitely many $k$'s, therefore the condition in $(i)$ does not holds. Thus we arrived at a contradiction. Hence $- r\notin I^*-LIM^r x_n$.
\end{exmp}
\begin{thm}\label{th1}
Let $\{x_n\}_{n\in\mathbb{N}}$ be a sequence in a normed linear space $(X, ||\cdot||)$ and $r$ be a non-negative real number. Then for an non trivial admissible ideal $I$ if $\{x_n\}_{n\in \mathbb{N}}$ is rough $I^*$-convergent of roughness degree $r$ to $x$ then it is also rough $I$-convergent of roughness degree $r$ to $x$.
\end{thm}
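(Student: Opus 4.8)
The plan is to argue directly from the definition, following the classical template that $I^{*}$-convergence implies $I$-convergence (and its rough analogue). Since $\{x_{n}\}_{n\in\mathbb{N}}$ is rough $I^{*}$-convergent of degree $r$ to $x$, Definition \ref{def1} hands us a set $M=\{m_{1}<m_{2}<\cdots<m_{k}<\cdots\}\in F(I)$ such that the subsequence $\{x_{m_{k}}\}_{k\in\mathbb{N}}$ is rough convergent of roughness degree $r$ to $x$. Put $H=\mathbb{N}\setminus M$; since $M\in F(I)$, Lemma (the one defining $F(I)$) gives $H\in I$.

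Next I would fix an arbitrary $\varepsilon>0$ and locate the "bad" index set $A(\varepsilon):=\{n\in\mathbb{N}:\|x_{n}-x\|\geq r+\varepsilon\}$. By rough convergence of the subsequence there is $N\in\mathbb{N}$ with $\|x_{m_{k}}-x\|<r+\varepsilon$ for all $k\geq N$. Consequently every index $n\in A(\varepsilon)$ is either not in $M$ at all, or is one of the finitely many $m_{1},m_{2},\dots,m_{N-1}$; that is,
\[
A(\varepsilon)\subseteq H\cup\{m_{1},m_{2},\dots,m_{N-1}\}.
\]
Because $I$ is admissible it contains every finite set, so $\{m_{1},\dots,m_{N-1}\}\in I$; combined with $H\in I$ and closure of $I$ under finite unions, the right-hand side lies in $I$. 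The hereditary property of $I$ then yields $A(\varepsilon)\in I$. Since $\varepsilon>0$ was arbitrary, this is exactly the statement that $x_{n}\xrightarrow{r-I}x$, completing the argument.

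There is essentially no serious obstacle here: the only point requiring care is the inclusion $A(\varepsilon)\subseteq H\cup\{m_{1},\dots,m_{N-1}\}$, which must be justified by noting that any $n\in M$ with $n\geq m_{N}$ satisfies $n=m_{k}$ for some $k\geq N$ and hence $\|x_{n}-x\|<r+\varepsilon$, so such $n$ cannot belong to $A(\varepsilon)$. Everything else is bookkeeping with the ideal axioms (finite additivity, heredity, admissibility) already recorded in the Preliminaries. One might also remark in passing that taking $r=0$ recovers the known implication "$I^{*}$-convergence $\Rightarrow$ $I$-convergence," and that the converse generally fails without an additivity-type hypothesis, which motivates the later results of the paper.
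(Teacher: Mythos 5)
Your proposal is correct and follows essentially the same route as the paper's own proof: extract $M\in F(I)$ from Definition \ref{def1}, obtain $N$ from the rough convergence of the subsequence, and observe that $\{n\in\mathbb{N}:\|x_n-x\|\geq r+\varepsilon\}\subseteq(\mathbb{N}\setminus M)\cup\{m_1,\dots,m_{N-1}\}\in I$. Your added justification of this inclusion is a welcome bit of extra care, but nothing in substance differs from the paper.
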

\begin{proof}
If possible let $\{x_n\}_{n\in\mathbb{N}}$ be rough $I^*$-convergent of roughness $r$ to $x$. Therefore there exists a set $M=\{m_1<m_2<\cdots<m_k<\cdots\}\in F(I)$ such that $\{x_{m_k}\}_{k\in\mathbb{N}}$ is rough convergent of roughness degree $r$ to $x$. Thus for any $\varepsilon >0$ there exists $N\in \mathbb{N}$ such that $||x_{m_k} - x||< r +\varepsilon $ for all $ k\geq N$. So $\{k\in\mathbb{N} : ||x_k - x|| \geq r +\varepsilon \} \subset {\mathbb{N}\setminus M} \cup \{m_1, m_2,\cdots, m_{N - 1}\}\rightarrow(i)$. Now since the right hand side of $(i)$ belongs to $I$, so $\{k\in\mathbb{N} : ||x_k - x|| \geq r +\varepsilon \} \in I$. Therefore the sequence $\{x_n\}_{n\in\mathbb{N}}$ is rough $I$-convergent of roughness degree $r$ to $x$.
\end{proof}
In view of Theorem \ref{th1}, it follows that rough $I*$-limit set of roughness degree $r$ is a subset of rough $I$-limit set of same roughness degree $r$.
Converse of the theorem \ref{th1} not necessarily true. That is if a sequence $\{x_n\}_{n\in\mathbb{N}}$ is rough $I$-convergent of some roughness degree $r$ to $x$ then the sequence $\{x_n\}_{n\in\mathbb{N}}$ may not be rough $I^*$-convergent of same roughness degree $r$ to $x$. This fact can be seen from the next example.
\begin{exmp}\label{3.2}
Let $\mathbb{N}=\displaystyle{\bigcup_ {j=1 }^ \infty} D_j$ be a decomposition of $\mathbb{N}$ such that $D_j =\{2^{(j -1)} (2s -1): s=1, 2, \cdots\}$. Then each of $D_j$ is infinite and disjoint from each others. Let $I$ be the class of all those subsets of $\mathbb{N}$ which intersects with only a finite numbers of $D_j$'s. Then $I$ is an admissible ideal on $\mathbb{N}$. Let us define a sequence in real numbers space with usual norm by $x_n=\frac{1}{j^j}$ if $n\in D_j$. Let $r$ be an arbitrary non-negative real number. Let $\varepsilon >0$ be arbitrarily chosen, then there exists a $l\in \mathbb{N}$ such that $\varepsilon >\frac{1}{l^l}$. Then $[-r, r]\subset I-LIM^r x_n$, as $\{n\in\mathbb{N} : |x_n - x |\geq r + \varepsilon\}\subset  D_1 \cup D_2 \cup \cdots D_l\in I$ for any $x\in [-r, r]$.\\
If possible suppose that the sequence defined above is rough $I^*$-convergent to $ - r$ of same roughness degree $r$. Therefore, there exists a set $M=\{m_1< m_2<\cdots< m_k <\cdots\}\in F(I)$ such that the sub sequence $\{x_{m_k}\}_{k\in \mathbb{N}}$ is rough convergent to $-r$ of roughness degree $r$. Now as $M\in F(I)$, so $\mathbb{N}\setminus M= H(\text{say})\in I$. Hence there exists a $p\in \mathbb{N}$ such that $H\subset D_1 \cup D_2 \cup\cdots\cup D_p$ and so $D_{p + 1 } \subset M$. Therefore $x_{m_k}= \frac{1}{(p +1) ^ {p+1}}$ for $m_k\in D_{p+1}$. Now for $\varepsilon =\frac{1}{(p+2)^{p+1}}$ and $m_k\in D_{p+1}$ we see that $|x_{m_k} + r|\geq r +\varepsilon$ for infinitely many $k$'s. Therefore the sequence $\{x_n\}_{n\in\mathbb{N}}$ is not rough $I^*$-convergent of roughness degree $r$ to $-r$ although $- r\in I-LIM^r x_n$.
\end{exmp} 
Let $r$ be a non-negative real number. Then for a sequence $\{x_n\}_{n\in \mathbb{N}}$ in a normed linear space rough $I$-limit of the sequence $\{x_n\}_{n\in\mathbb{N}}$ of roughness degree $r$ is also a rough $I^*$-limit of same roughness degree $r$ if the ideal $I$ satisfies the condition (AP). To prove this we need the following lemma.
\begin{lem}\cite{m}\label{lem1}
Let $\{A_n\}_{n\in\mathbb{N}}$ be a countable family of subsets of $\mathbb{N}$ such that each $A_n$ belongs to  $F(I)$, the filter associated with an admissible ideal $I$ which has the property (AP). Then there exists a set $B\subset \mathbb{N}$ such that $B\in F(I)$ and the set $B\setminus A_n$ is finite for all $n\in \mathbb{N}$.
\end{lem}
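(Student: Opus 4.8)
The plan is to dualize the statement: since each $A_n\in F(I)$, the complements $C_n:=\mathbb{N}\setminus A_n$ lie in $I$, and because $B\setminus A_n=B\cap C_n$, what we are really after is a set $B\in F(I)$ with $B\cap C_n$ finite for every $n$. I would produce such a $B$ from the condition (AP) applied to the family $\{C_n\}_{n\in\mathbb{N}}$, after a preliminary disjointification.

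First I would disjointify. Set $C_1':=C_1$ and, for $n\ge 2$, $C_n':=C_n\setminus(C_1\cup\cdots\cup C_{n-1})$. Then the sets $C_n'$ are pairwise disjoint, each $C_n'\subseteq C_n$ so $C_n'\in I$, and a straightforward induction gives $C_1\cup\cdots\cup C_n=C_1'\cup\cdots\cup C_n'$ for every $n$; in particular $C_n\subseteq C_1'\cup\cdots\cup C_n'$. This inclusion is exactly what will later let us control $B\cap C_n$ through only finitely many of the $C_k'$, and it is needed precisely because (AP) is formulated only for mutually disjoint families.

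Next, apply (AP) to $\{C_n'\}_{n\in\mathbb{N}}$: there is a family $\{B_n'\}_{n\in\mathbb{N}}$ with $C_n'\Delta B_n'$ finite for each $n$ and $B':=\bigcup_{n\in\mathbb{N}}B_n'\in I$. Put $B:=\mathbb{N}\setminus B'$, so that $B\in F(I)$. To finish, fix $n$ and estimate $B\setminus A_n=B\cap C_n\subseteq B\cap(C_1'\cup\cdots\cup C_n')=\bigcup_{k=1}^{n}(B\cap C_k')$. For each $k\le n$ we have $B\cap C_k'=C_k'\setminus B'\subseteq C_k'\setminus B_k'\subseteq C_k'\Delta B_k'$, which is finite; hence $B\setminus A_n$ is a finite union of finite sets, so it is finite. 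This gives the desired $B\in F(I)$ and completes the argument.

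The only genuinely delicate point is the disjointification together with the inclusion $C_n\subseteq\bigcup_{k\le n}C_k'$; once that is in place, everything else reduces to routine manipulation of symmetric differences and complements, so I would not expect any real obstacle beyond keeping that bookkeeping straight.
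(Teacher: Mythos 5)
Your proof is correct and complete: dualizing to the complements $C_n=\mathbb{N}\setminus A_n\in I$, disjointifying so that (AP) applies, and then checking $B\setminus A_n\subseteq\bigcup_{k\le n}(C_k'\setminus B_k')$ is exactly the standard argument. The paper itself offers no proof (it cites the lemma from Nabiev--Pehlivan--G\"urdal), and your route coincides with the proof given there, so there is nothing to fix.
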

\begin{thm}\label{th2}
Let $I$ be an ideal which has the property (AP) and $\{x_n\}_{n\in \mathbb{N}}$ be a sequence in a normed linear space $(X, ||\cdot||)$. Then if $x$ is a rough $I$-limit of the sequence $\{x_n\}_{n\in\mathbb{N}}$ of some roughness degree $r$ then $x$ is also a rough $I^*$-limit of the sequence $\{x_n\}_{n\in\mathbb{N}}$ of same roughness degree $r$.
\end{thm}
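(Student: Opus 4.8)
The plan is to mimic the classical argument that (AP) makes $I$-convergence imply $I^*$-convergence, adapted to the rough setting. Suppose $x \in I\text{-}LIM^r x_n$, so that for every $\varepsilon > 0$ the set $\{n : \|x_n - x\| \geq r + \varepsilon\} \in I$. Fix a decreasing sequence $\varepsilon_j \downarrow 0$ (say $\varepsilon_j = 1/j$) and set $A_j = \{n \in \mathbb{N} : \|x_n - x\| \geq r + \varepsilon_j\} \in I$. Passing to complements, $F_j := \mathbb{N} \setminus A_j \in F(I)$, and we have the nesting $F_1 \subseteq F_2 \subseteq \cdots$ (since $\varepsilon_j$ is decreasing, $A_j$ is increasing). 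By Lemma \ref{lem1}, since $I$ has property (AP) and each $F_j \in F(I)$, there is a set $M \in F(I)$ such that $M \setminus F_j$ is finite for every $j \in \mathbb{N}$.

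The second step is to show that the subsequence $\{x_{m_k}\}_{k\in\mathbb{N}}$ indexed by $M = \{m_1 < m_2 < \cdots\}$ is rough convergent of roughness degree $r$ to $x$. Let $\varepsilon > 0$ be given and pick $j$ with $\varepsilon_j < \varepsilon$. Since $M \setminus F_j$ is finite, there is $N \in \mathbb{N}$ such that $m_k \in F_j$ for all $k \geq N$; for such $k$ we have $m_k \notin A_j$, i.e. $\|x_{m_k} - x\| < r + \varepsilon_j < r + \varepsilon$. Thus $\|x_{m_k} - x\| < r + \varepsilon$ for all $k \geq N$, which is exactly the definition of $x_{m_k} \xrightarrow{r} x$. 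Combined with $M \in F(I)$, Definition \ref{def1} gives $x_n \xrightarrow{r-I^*} x$, so $x \in I^*\text{-}LIM^r x_n$, completing the proof.

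I do not expect a serious obstacle here: the roughness degree $r$ is simply carried along as an additive constant inside the norm estimate and never interacts with the combinatorics, so the only real content is the (AP)-based extraction of the single set $M$ from the countable family $\{F_j\}$, which is precisely what Lemma \ref{lem1} supplies. The one point requiring a little care is the bookkeeping that the chosen threshold sets are nested (so that one may either invoke the monotone form or simply note that finiteness of $M\setminus F_j$ for the relevant index $j$ suffices), and that $M$ being in $F(I)$ is infinite, so that the subsequence $\{x_{m_k}\}$ is genuinely a sequence — but admissibility of $I$ guarantees $F(I)$ contains no finite sets, so this is automatic.
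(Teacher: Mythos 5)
Your proposal is correct and follows essentially the same route as the paper's proof: both reduce to a countable family of sets in $F(I)$ determined by thresholds $r+\varepsilon_j$ (the paper uses $r+l/j$ and works directly with the sets $\{n:\|x_n-x\|<r+l/j\}\in F(I)$), apply Lemma \ref{lem1} to extract a single $M\in F(I)$ with $M\setminus F_j$ finite for all $j$, and then read off rough convergence of the subsequence indexed by $M$. The only blemish is the parenthetical claim that $F_1\subseteq F_2\subseteq\cdots$: since the $A_j$ are increasing, the complements $F_j$ are in fact decreasing; but, as you yourself note, the nesting is never used, so this does not affect the argument.
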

\begin{proof}
Let $I$ be an ideal on $\mathbb{N}$ which satisfies the condition (AP) and $\{x_n\}_{n\in\mathbb{N}}$ be a sequence in a normed linear space $(X, ||\cdot|| )$. Also let us suppose that $x$ be a rough $I$-limit of the sequence $\{x_n\}_{n\in\mathbb{N}}$ of roughness degree $r$ for some $r\geq 0$. Therefore for any $\varepsilon >0$ the set $\{n\in \mathbb{N} : ||x_n - x||\geq r +\varepsilon\}\in I$. Let $l$ be any arbitrary positive real number, so $\frac{l}{i} $ is also positive real number for each $i\in \mathbb{N}$. Define $A_i =\{n\in\mathbb{N} : || x_n - x || < r + \frac{l}{i}\}$ for each $i\in \mathbb{N}$. Then $A_i\in F(I)$ for each $i\in \mathbb{N}$. Also by the lemma \ref{lem1} there exists a set $B\subset \mathbb{N}$ such that $B\in F(I)$ and $B\setminus A_i$ is finite for all $i\in \mathbb{N}$. Now for any arbitrary $\varepsilon >0$ there exists a $j\in \mathbb{N}$ such that $\varepsilon > \frac{l}{j}$. Since $B\setminus A_j$ is finite, so there exists $k=k(j)\in \mathbb{N}$ such that $n\in B\cap A_j$ for all $n\in B$ with $n\geq k$. Now $|| x_n - x|| < r + \frac{l}{j}< r + \varepsilon$ for all $n\in B$ and $n\geq k$. Thus the sub sequence $\{x_n\}_{n\in B}$ is rough convergent of roughness degree $r$ to $x$. Therefore $x$ is also a rough $I^*$-limit of roughness degree $r$. Hence the result follows.
\end{proof}
\begin{cor}
Let $\{x_n\}_{n\in\mathbb{N}}$ be a sequence in a normed linear space $(X, || \cdot||)$ and $r$ be a non-negative real number. Let $I$ be an ideal on $\mathbb{N}$ such that it satisfies the condition (AP). Then both the rough $I$-limit set of roughness degree $r$ and rough $I^*$-limit set of roughness degree $r$ of a sequence $\{x_n\}_{n\in\mathbb{N}}$ are equal.
\end{cor}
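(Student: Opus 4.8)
The plan is to obtain the equality of the two limit sets by proving the two inclusions separately, each of which is already essentially available from the preceding results. First I would recall that by Theorem \ref{th1}, for any non-trivial admissible ideal $I$ (no (AP) needed here) and any non-negative real number $r$, rough $I^*$-convergence of roughness degree $r$ to a point $x$ implies rough $I$-convergence of roughness degree $r$ to the same $x$. Applying this to every $x \in I^*\text{-}LIM^r x_n$ yields the inclusion $I^*\text{-}LIM^r x_n \subset I\text{-}LIM^r x_n$, as was already observed in the remark following Theorem \ref{th1}.

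For the reverse inclusion I would invoke Theorem \ref{th2}, which is exactly where the hypothesis that $I$ satisfies the condition (AP) enters. That theorem says that if $x$ is a rough $I$-limit of $\{x_n\}_{n\in\mathbb{N}}$ of roughness degree $r$, then $x$ is also a rough $I^*$-limit of $\{x_n\}_{n\in\mathbb{N}}$ of the same roughness degree $r$. Thus every $x \in I\text{-}LIM^r x_n$ lies in $I^*\text{-}LIM^r x_n$, giving $I\text{-}LIM^r x_n \subset I^*\text{-}LIM^r x_n$. Combining the two inclusions gives $I\text{-}LIM^r x_n = I^*\text{-}LIM^r x_n$, which is the assertion of the corollary.

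Since both halves are immediate consequences of theorems already proved, there is no genuine obstacle here; the corollary is purely a packaging statement. The only thing to be careful about is that the two theorems being combined are quantified over the same fixed sequence $\{x_n\}_{n\in\mathbb{N}}$, the same ambient normed linear space $(X,\|\cdot\|)$, and the same roughness degree $r$, so that the set-level inclusions line up correctly; this is automatic from the way the limit sets are defined as $\{x \in X : x_n \xrightarrow{r-I} x\}$ and $\{x \in X : x_n \xrightarrow{r-I^*} x\}$. One could also remark, as a sanity check for the case $r = 0$, that the corollary then recovers the classical fact from \cite{h} that under (AP) the notions of $I$-convergence and $I^*$-convergence of sequences in a normed linear space coincide.
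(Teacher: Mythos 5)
Your proposal is correct and follows exactly the paper's own argument: the paper proves the corollary by citing Theorem \ref{th1} for the inclusion $I^*\text{-}LIM^r x_n \subset I\text{-}LIM^r x_n$ and Theorem \ref{th2} (where (AP) is used) for the reverse inclusion. You have merely spelled out the same two-step argument in more detail.
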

\begin{proof}
In view of theorem \ref{th1} and theorem \ref{th2} the result follows.
\end{proof}
Rough $I$-limit set of a sequence $\{x_n\}_{n\in\mathbb{N}}$ in a normed linear space is a subset of rough $I$-limit set of a sub sequence $\{x_{n_k}\}_{k\in\mathbb{N}}$. But rough $I^*$-limit set of a sequence $\{x_n\}_{n\in\mathbb{N}}$ in a normed linear space may not be a subset of rough $I^*$-limit set of a sub sequence $\{x_{n_k}\}_{k\in\mathbb{N}}$. This fact can be justified by the following example.

\begin{exmp}
Let $I$ be the ideal of all subsets of $\mathbb{N}$ whose natural density is zero. Let us consider a sequence $\{x_n\}_{n\in\mathbb{N}}$ in real number space with usual norm as follows: $x_n=
\begin{cases}
 -1 & n=k^2\\
\frac{1}{n} & n\neq k^2
\end{cases}$, where $k\in\mathbb{N}$. Now as the natural density of the set $A=\{n\in\mathbb{N}: n=k^2,\: k\in\mathbb{N}\}$ is zero, therefore $A\in I$. So $\mathbb{N}\setminus A=M(\text{say})\in F(I)$. Put $M=\{m_1<m_2<\cdots<m_k<\cdots\}$. Then for any arbitrary $\varepsilon>0$ we can see that $|x_{m_k} - 1| < 1 + \varepsilon$ holds for all $k\in\mathbb{N}$. Hence $1$ is a rough $I^*$-limit of roughness degree $r=1$ of the sequence $\{x_n\}_{n\in\mathbb{N}}$. Let $A$ be enumerated as $A=\{n_1<n_2<\cdots<n_k<\cdots\}$ and consider the sub sequence $\{x_{n_k}\}_{k\in\mathbb{N}}$ of $\{x_n\}_{n\in\mathbb{N}}$. Since for any sub sequence $\{x_{n_{k_m}}\}_{m\in\mathbb{N}}$ of $\{x_{n_k}\}_{k\in\mathbb{N}}$ , we can see that for $0<\varepsilon<1$  we have $|x_{n_{k_m}} - 1|> 1 +\varepsilon$ for all $m$. Hence for this choice of $\varepsilon$, there does not exists any $N(\varepsilon)\in \mathbb{N}$ for which $|x_{n_{k_m}} - 1|< 1 +\varepsilon $ holds for all $m\geq N(\varepsilon)$. Therefore there does not exists any $M'=\{m'_1<m'_2<\cdots< m'_k<\cdots\}\in F(I)$ for which $\{x_{n_{m'_k}}\}_{k\in \mathbb{N}}$ is rough convergent to $1$ of roughness degree $r=1$. So $1$ is not a rough $I^*$-limit of roughness degree $r=1$ of the sub sequence considered above.
\end{exmp}
\begin{thm}
If $I$ is an ideal which satisfies the condition (AP), then the rough $I^*$-limit set of a sequence $\{x_n\}_{n\in\mathbb{N}}$ of some roughness degree $r$ is a subset of rough $I^*$-limit set of a sub sequence $\{x_{n_k}\}_{k\in\mathbb{N}}$ of same roughness degree $r$.
\end{thm}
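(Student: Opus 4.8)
The plan is to obtain the inclusion $I^*-LIM^r x_n \subseteq I^*-LIM^r x_{n_k}$ by composing three facts already available in the paper. Fix $r \geq 0$, let $\{x_{n_k}\}_{k\in\mathbb{N}}$ be a subsequence of $\{x_n\}_{n\in\mathbb{N}}$, and take an arbitrary $x \in I^*-LIM^r x_n$.

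First I would apply Theorem \ref{th1}: rough $I^*$-convergence of degree $r$ to $x$ forces rough $I$-convergence of degree $r$ to $x$, so $x \in I-LIM^r x_n$. Next I would use the subsequence theorem for rough $I$-convergence from \cite{R7}, which asserts $I-LIM^r x_n \subseteq I-LIM^r x_{n_k}$; hence $x$ is a rough $I$-limit of the subsequence $\{x_{n_k}\}_{k\in\mathbb{N}}$ of roughness degree $r$. Finally, since $\{x_{n_k}\}_{k\in\mathbb{N}}$ is itself a sequence in $X$ indexed by $\mathbb{N}$ and $I$ satisfies the condition (AP), Theorem \ref{th2} applied to this subsequence converts its rough $I$-limit into a rough $I^*$-limit of the same roughness degree; thus $x \in I^*-LIM^r x_{n_k}$, and since $x$ was arbitrary the inclusion follows.

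The only place the condition (AP) enters is the last step, and that is where the real content lies: without it there is no reason a rough $I$-limit of the subsequence should be realizable along a set belonging to $F(I)$, and indeed the example preceding this theorem shows that in general $I^*-LIM^r$ does not pass to subsequences. One might hope instead to build the witnessing set $M'\in F(I)$ for $\{x_{n_k}\}$ directly from the witnessing set $M$ for $\{x_n\}$, but $\{k\in\mathbb{N} : n_k \in M\}$ need not belong to $F(I)$ (for instance when $\{n_k\}$ enumerates a set lying in $I$), so the direct route fails and passing through rough $I$-convergence is the natural repair. A minor point worth stating explicitly is that Theorems \ref{th1} and \ref{th2} and the subsequence result of \cite{R7} are all formulated for sequences over $\mathbb{N}$ with respect to the same ideal $I$, which is exactly the present situation once the subsequence is re-indexed by $\mathbb{N}$.
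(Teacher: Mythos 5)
Your proposal is correct and follows exactly the same route as the paper's own proof: pass from rough $I^*$-limit to rough $I$-limit via Theorem \ref{th1}, apply the subsequence inclusion for rough $I$-limit sets from \cite{R7}, and then use the condition (AP) together with Theorem \ref{th2} to return to a rough $I^*$-limit of the subsequence. Your closing remarks on why the direct construction of a witnessing set fails are a useful addition but do not change the argument.
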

\begin{proof}
Let $x$ be a rough $I^*$-limit of a sequence $\{x_n\}_{n\in\mathbb{N}}$ of some roughness degree $r$. As a rough $I^*$-limit is also a rough $I$-limit of $\{x_n\}_{n\in\mathbb{N}}$, so $x$ is a rough $I$-limit of $\{x_n\}_{n\in\mathbb{N}}$. Since rough $I$-limit of a sequence $\{x_n\}_{n\in\mathbb{N}}$ is a subset of rough $I$-limit of a sub sequence $\{x_{n_k}\}_{k\in\mathbb{N}}$, hence $x$ is rough $I$-limit of the sub sequence $\{x_{n_k}\}_{k\in\mathbb{N}}$. Now as $I$ satisfies the condition (AP), so $x$ is also a rough $I^*$-limit of the sub sequence $\{x_{n_k}\}_{k\in\mathbb{N}}$. 
\end{proof}
\begin{thm}\label{th3}
Let $I$ and $K$ be two admissible ideals on $\mathbb{N}$ and $r$ be a non-negative real number. Suppose that a sequence $\{x_n\}_{n\in\mathbb{N}}$ in a normed linear space $(X, ||\cdot||)$ is rough $I^K$-convergent to $x$ of roughness degree $r$ then $\{x_n\}_{n\in\mathbb{N}}$ is also rough $I$-convergent to $x$ of same roughness degree $r$ if $K\subset I$.
\end{thm}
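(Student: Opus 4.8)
The plan is to unwind the definition of rough $I^K$-convergence and then to split the relevant ``bad'' index set into the part lying in the witnessing set $M\in F(I)$ and the part lying in its complement $\mathbb{N}\setminus M$, handling the two pieces separately.

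First I would invoke Definition \ref{def2}: since $\{x_n\}_{n\in\mathbb{N}}$ is rough $I^K$-convergent to $x$ of degree $r$, there is a set $M=\{m_1<m_2<\cdots<m_k<\cdots\}\in F(I)$ such that, for every $\varepsilon>0$, the set $\{k\in\mathbb{N}: ||x_{m_k}-x||\geq r+\varepsilon\}$ belongs to the trace $K|M=\{A\cap M:A\in K\}$. Identifying this set, via the bijection $k\mapsto m_k$, with $C_\varepsilon:=\{n\in M: ||x_n-x||\geq r+\varepsilon\}$, we obtain $C_\varepsilon=A\cap M$ for some $A\in K$.

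The key observation is the inclusion $K|M\subseteq K$: every member $A\cap M$ of $K|M$ is contained in $A\in K$ and hence lies in $K$ by the hereditary property of ideals. Together with the hypothesis $K\subseteq I$ this yields $K|M\subseteq I$, so $C_\varepsilon\in I$ for each $\varepsilon>0$.

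Finally, fixing $\varepsilon>0$ I would write
$$\{n\in\mathbb{N}: ||x_n-x||\geq r+\varepsilon\}=C_\varepsilon\cup\big(\{n\in\mathbb{N}: ||x_n-x||\geq r+\varepsilon\}\cap(\mathbb{N}\setminus M)\big).$$
The first summand lies in $I$ by the previous step; the second is a subset of $\mathbb{N}\setminus M$, which belongs to $I$ because $M\in F(I)$, and so it too belongs to $I$ by hereditariness. Since $I$ is closed under finite unions, $\{n\in\mathbb{N}: ||x_n-x||\geq r+\varepsilon\}\in I$, i.e.\ $\{x_n\}_{n\in\mathbb{N}}$ is rough $I$-convergent to $x$ of roughness degree $r$. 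The only genuinely delicate point is the index bookkeeping between the position $k$ and the value $m_k$ when passing from Definition \ref{def2} to $C_\varepsilon$; after that, the proof reduces to the chain $K|M\subseteq K\subseteq I$ together with $\mathbb{N}\setminus M\in I$, so I do not anticipate any real obstacle.
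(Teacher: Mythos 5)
Your proposal is correct and follows essentially the same route as the paper's proof: write the bad set $\{n\in\mathbb{N}: \|x_n-x\|\geq r+\varepsilon\}$ as the union of its part inside $M$ (which lies in $K|M\subseteq K\subseteq I$) and its part inside $\mathbb{N}\setminus M\in I$. Your explicit remark about the $k\mapsto m_k$ identification is in fact slightly more careful than the paper, which passes silently from the index set $\{k:\|x_{m_k}-x\|\geq r+\varepsilon\}$ to a subset $K_1\cap M$ of $M$.
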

\begin{proof}
Let $I$ and $K$ be two admissible ideals on $\mathbb{N}$ such that $K\subset I$. Also let $r$ be a non-negative real number. Suppose that a sequence $\{x_n\}_{n\in\mathbb{N}}$ is rough $I^K$-convergent to $x$ of roughness degree $r$. Then there exists a set $M=\{m_1<m_2<\cdots<m_k<\cdots\}\in F(I)$ such that for any $\varepsilon >0$ the set $A(\varepsilon)=\{k\in \mathbb{N} : ||x_{m_k} - x || \geq r +\varepsilon\}\in K|M$. Suppose that $A(\varepsilon)=\{k\in\mathbb{N}: || x_{m_k} - x||\geq r +\varepsilon\}=K_1 \cap M$ for some $K_1\in K$. Now as $K$ is an ideal and $K_1\cap M\subset K_1$, so $K_1\cap M\in K$. Again $\{n\in \mathbb{N} : ||x_n - x|| \geq r +\varepsilon\}\subset (K_1\cap M)\cup \mathbb{N}\setminus M $. Since $\mathbb{N}\setminus M \in I$ and $K\subset I$, therefore $(K_1\cap M)\cup \mathbb{N}\setminus M\in I$. So $\{n\in \mathbb{N} : ||x_n - x|| \geq r +\varepsilon\}\in I$. Hence the result follows.
\end{proof}
Converse part of the theorem \ref{th3} is also valid, i.e.,  if a rough $I^K$-limit $x$ of a sequence $\{x_n\}_{n\in\mathbb{N}}$ of some roughness degree $r$ implies that $x$ is also a rough $I$-limit of same roughness degree $r$ then $K\subset I$. To prove this we need the following lemma.
\begin{lem}\label{lemma 3.5}
If $I$ and $K$ are ideals on $\mathbb{N}$. Then a rough $K$-limit of a sequence $\{x_n\}_{n\in\mathbb{N}}$ of some roughness degree $r$ is also a rough $I^K$-limit of $\{x_n\}_{n\in\mathbb{N}}$ of same roughness degree $r$.
\end{lem}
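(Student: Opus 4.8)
The plan is to produce a witnessing set $M\in F(I)$ for Definition \ref{def2} by taking the trivial one, $M=\mathbb{N}$. This is legitimate because $\phi\in I$, so $\mathbb{N}=\mathbb{N}\setminus\phi$ lies in the filter $F(I)$ associated with $I$. For this choice the trace of $K$ on $M$ degenerates to $K$ itself, since $K|M=\{A\cap\mathbb{N}:A\in K\}=K$, and the subsequence $\{x_{m_k}\}_{k\in\mathbb{N}}$ determined by $M$ is nothing but the original sequence $\{x_n\}_{n\in\mathbb{N}}$, with $m_k=k$.

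With these identifications in hand, I would argue as follows. Assume $x$ is a rough $K$-limit of $\{x_n\}_{n\in\mathbb{N}}$ of roughness degree $r$, i.e. $x_n\xrightarrow{r-K}x$; by definition this means that $\{n\in\mathbb{N}:||x_n-x||\geq r+\varepsilon\}\in K$ for every $\varepsilon>0$. Then for every $\varepsilon>0$ the set $\{k\in\mathbb{N}:||x_{m_k}-x||\geq r+\varepsilon\}$ coincides with $\{n\in\mathbb{N}:||x_n-x||\geq r+\varepsilon\}$, which belongs to $K=K|M$. Hence the subsequence $\{x_{m_k}\}_{k\in\mathbb{N}}$ is rough $K|M$-convergent to $x$ of roughness degree $r$, and so, by Definition \ref{def2}, $x$ is a rough $I^K$-limit of $\{x_n\}_{n\in\mathbb{N}}$ of the same roughness degree $r$.

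I do not expect any real obstacle here: the statement is the rough counterpart of the lemma of Ma\v{c}aj and Sleziak in \cite{4th2} asserting that $K$-convergence implies $I^K$-convergence, and the only point worth keeping in mind is that a nontrivial admissible ideal always has $\mathbb{N}$ in its associated filter, so the trivial witness $M=\mathbb{N}$ is always available. The intended use of this lemma is, together with Theorem \ref{th3}, to obtain the converse of Theorem \ref{th3}: since every rough $K$-limit is a rough $I^K$-limit, the hypothesis that rough $I^K$-limits are always rough $I$-limits would force every rough $K$-limit to be a rough $I$-limit, and testing this on a suitable two-valued sequence built from an arbitrary member of $K$ then yields $K\subseteq I$.
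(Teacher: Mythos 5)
Your proposal is correct and follows essentially the same route as the paper: both take the trivial witness $M=\mathbb{N}\in F(I)$ (justified by $\phi\in I$), observe that $K|M=K$ and that the associated subsequence is the original sequence, and then transfer the rough $K$-convergence condition verbatim into Definition \ref{def2}. No gaps.
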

\begin{proof}
Let $I$ and $K$ be two ideals on $\mathbb{N}$ and $r$ be a non-negative real number. Let $x$ be a rough $K$-limit  of $\{x_n\}_{n\in\mathbb{N}}$ of roughness degree $r$ i.e., $x\in K-LIM^r x_n$. Then for any $\varepsilon >0$, $\{n\in\mathbb{N}: ||x_n - x||\geq r + \varepsilon\}\in K$. Now as $\phi \in I$, so $\mathbb{N}\in F(I)$. Let $M=\{m_1<m_2<\cdots<m_k<\cdots\}=\mathbb{N}\in F(I)$, then $\{x_{m_k}\}=\{x_n\}$ and $K|M = K$. Therefore $\{k\in \mathbb{N}: ||x_{m_k} - x||\geq r +\varepsilon \}= \{n \in\mathbb{N} : ||x_n - x||\geq r +\varepsilon\}\in K=K|M$. So $x\in I^K-LIM^r x_n$. Hence the result follows.
\end{proof}
\begin{thm}\label{corproof}
If rough $I^K$-limit of a sequence $\{x_n\}_{n\in\mathbb{N}}$ of some roughness degree $r$ is $x$ implies that $x$ is also a rough $I$-limit of $\{x_n\}_{n\in\mathbb{N}}$ of same roughness degree $r$ then $K\subset I $.
\end{thm}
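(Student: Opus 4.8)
The plan is to argue by contraposition. I would assume $K \not\subset I$ and manufacture a single sequence in a normed linear space that has a rough $I^K$-limit of some roughness degree $r$ but fails to have that point as a rough $I$-limit of the same degree $r$; this contradicts the hypothesis and forces $K \subset I$. So fix a set $A \in K$ with $A \notin I$. Since $I$ is admissible it contains every finite subset of $\mathbb{N}$, so $A$ is automatically infinite (though we will not even need this).

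Next I would exhibit an explicit witness. Work in $\mathbb{R}$ with its usual norm (in a general normed linear space $X$ one fixes any nonzero vector $a$ and replaces the value $1$ below by $a$), and define
\[
x_n = \begin{cases} 1, & n \in A,\\ 0, & n \notin A. \end{cases}
\]
Take $r = 0$ and $x = 0$. For every $\varepsilon > 0$ we have
$\{n \in \mathbb{N} : |x_n - 0| \geq r + \varepsilon\} = \{n \in \mathbb{N} : |x_n| \geq \varepsilon\} \subseteq A$,
and since $A \in K$ and $K$ is an ideal, this set belongs to $K$. Hence $0$ is a rough $K$-limit of $\{x_n\}_{n\in\mathbb{N}}$ of roughness degree $r = 0$, i.e. $0 \in K\text{-}LIM^{0} x_n$. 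By Lemma \ref{lemma 3.5}, $0$ is then a rough $I^K$-limit of $\{x_n\}_{n\in\mathbb{N}}$ of roughness degree $0$.

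Finally I would verify that $0$ is \emph{not} a rough $I$-limit of $\{x_n\}_{n\in\mathbb{N}}$ of roughness degree $0$: taking $\varepsilon = \tfrac{1}{2}$ gives
$\{n \in \mathbb{N} : |x_n - 0| \geq r + \varepsilon\} = \{n \in \mathbb{N} : |x_n| \geq \tfrac{1}{2}\} = A$,
which is not in $I$ by the choice of $A$. Thus the implication ``rough $I^K$-limit $\Rightarrow$ rough $I$-limit of the same roughness degree'' fails for this sequence, contradicting the assumed property. Therefore $K \subset I$, as claimed. There is no genuinely hard step here; the only point requiring a little attention is to keep the two prescribed values distinct, which is why the construction is stated in $\mathbb{R}$ (or via a fixed $a \neq 0$ in a general $X$), and the rest is a direct application of Lemma \ref{lemma 3.5} together with the admissibility of $I$.
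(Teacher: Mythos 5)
Your proposal follows the same strategy as the paper's proof: argue by contradiction/contraposition, pick $A\in K\setminus I$, build a two-valued sequence supported on $A$, use Lemma \ref{lemma 3.5} to pass from rough $K$-limit to rough $I^K$-limit, and then observe that $A\notin I$ blocks the rough $I$-limit. However, there is one genuine gap: you specialize to $r=0$. The theorem, as the paper's proof and the companion Theorem \ref{th3} make clear, is meant to hold for each fixed roughness degree $r\geq 0$ separately (the paper's proof begins by letting $r$ be an arbitrary non-negative real and produces a counterexample sequence for that $r$); the subsequent corollary states the ``if and only if'' degree by degree. Your witness $x_n=\mathbf{1}_A(n)$ with limit $0$ refutes the hypothesis only for degrees $r<1$: for $r\geq 1$ the set $\{n\in\mathbb{N}:|x_n-0|\geq r+\varepsilon\}$ is empty for every $\varepsilon>0$, so $0$ is a rough $I$-limit of degree $r$ as well and nothing is contradicted. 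Consequently your argument proves only the non-rough (ordinary $I^K$ versus $I$) case, not the statement for a general prescribed $r$. The repair is a one-line scaling, and it is exactly what the paper does: put $||x||=1$, set $x_n=x$ for $n\notin A$ and $x_n=(r+2)x$ for $n\in A$, so that $||x_n-x||=r+1$ on $A$ and the offending set equals $A$ precisely for $0<\varepsilon\leq 1$; then $x$ is a rough $K$-limit (hence, by Lemma \ref{lemma 3.5}, a rough $I^K$-limit) of degree $r$ but not a rough $I$-limit of degree $r$. With that modification your proof coincides with the paper's.
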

\begin{proof}
Suppose that $K\not\subset I$ and $r$ be a non-negative real number. Then there exists a set $A\in K\setminus I$. Let us choose $x, y\in X$ such that $||x||=1$ and $ y = (r + 2 )x$, then we have $|| x - y||\geq r + \varepsilon$ for $0< \varepsilon \leq 1$ and  $||x - y|| < r +\varepsilon$ for $\varepsilon > 1$. Now define a sequence $\{x_n\}_{n\in\mathbb{N}}$ as follows $x_n=
\begin{cases}
x + r , & n\in \mathbb{N}\setminus A\\
y, & n\in A
\end{cases}$. Then for any $\varepsilon >0$ the set $\{n\in\mathbb{N}: || x_n - x||\geq r +\varepsilon\}$ is either the set $A$ (when $0<\varepsilon\leq 1$) or $\phi$ (when $\varepsilon>1$). Since $K$ is an admissible ideal and $A\in K$, therefore $\{n\in\mathbb{N}: || x_n - x||\geq r +\varepsilon\}\in K$. Thus $x$ is a rough $K$-limit of $\{x_n\}_{n\in\mathbb{N}}$ of roughness degree $r$. Now by lemma \ref{lemma 3.5}, $x$ is a rough $I^K$-limit of $\{x_n\}_{n\in\mathbb{N}}$ of roughness degree $r$. Since for $0<\varepsilon \leq 1$, $\{n\in\mathbb{N}: ||x_n - x||\geq r +\varepsilon\}= A$ and $A\notin I$. So $\{n\in\mathbb{N}: ||x_n - x||\geq r +\varepsilon\}\notin I$ and hence $x$ is not a rough $I$-limit of roughness degree $r$. But by our assumption, $x$ is also a rough $I$-limit of $\{x_n\}_{n\in\mathbb{N}}$. Thus we arrive at a contradiction and so, $K\subset I$.  
\end{proof}
\begin{cor}
Let $I$ and $K$ be two ideals on $\mathbb{N}$. Then rough $I^K$-limit set is a subset of $I$-limit set of a sequence $\{x_n\}_{n\in\mathbb{N}}$ of some roughness degree $r$ if and only if $K\subset I$.
\end{cor}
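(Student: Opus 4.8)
The plan is to deduce this corollary directly from Theorem~\ref{th3} and Theorem~\ref{corproof}, which together already contain both halves of the equivalence; the only remaining task is to translate the pointwise convergence statements into the inclusion of limit sets and to keep track of the quantifier over sequences.

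First I would handle sufficiency. Fix an arbitrary sequence $\{x_n\}_{n\in\mathbb{N}}$ in $X$ and a given $r\geq 0$, assume $K\subset I$, and take $x\in I^K-LIM^r x_n$. By Definition~\ref{def2} this says $x_n\xrightarrow{r-I^K}x$, so Theorem~\ref{th3} --- which is precisely the assertion that under $K\subset I$ rough $I^K$-convergence of degree $r$ entails rough $I$-convergence of degree $r$ --- yields $x_n\xrightarrow{r-I}x$, i.e.\ $x\in I-LIM^r x_n$. Since $x$ was arbitrary, $I^K-LIM^r x_n\subset I-LIM^r x_n$.

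Next I would do necessity. Suppose $I^K-LIM^r x_n\subset I-LIM^r x_n$ holds for every sequence $\{x_n\}_{n\in\mathbb{N}}$ in $X$ at the given degree $r$. Then whenever $x$ is a rough $I^K$-limit of roughness degree $r$ of some sequence, the inclusion places $x$ in that sequence's rough $I$-limit set, so $x$ is also a rough $I$-limit of the same degree $r$. This is exactly the hypothesis of Theorem~\ref{corproof}, whose conclusion is $K\subset I$. Contrapositively, if $K\not\subset I$ one may pick $A\in K\setminus I$ and, as in the proof of Theorem~\ref{corproof}, apply Lemma~\ref{lemma 3.5} to a suitable two-valued sequence supported on $A$ to exhibit a point that is a rough $I^K$-limit but not a rough $I$-limit, violating the inclusion.

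I do not expect a genuine obstacle here; the one point deserving care is the order of quantifiers in the necessity direction. The inclusion must be assumed for all sequences, since the witness used in Theorem~\ref{corproof} is a specifically constructed sequence adapted to the chosen $A\in K\setminus I$, so the corollary would fail if ``for every sequence'' were weakened to a single fixed sequence. With the natural ``for all sequences'' reading, the two cited theorems combine to give the stated equivalence at each $r\geq 0$.
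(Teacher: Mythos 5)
Your proposal is correct and matches the paper's argument: the corollary is obtained exactly by combining Theorem~\ref{th3} (sufficiency of $K\subset I$) with Theorem~\ref{corproof} (necessity), as the paper's one-line proof does. Your added remark about the quantifier over sequences in the necessity direction is a sensible clarification but does not change the route.
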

\begin{proof}
In view of Theorem \ref{th3} and Theorem \ref{corproof} the result follows.
\end{proof}
In general, if $x$ is a rough $I$-limit of a sequence $\{x_n\}_{n\in\mathbb{N}}$ in a normed linear space then it does not necessarily implies that $x$ is also a rough $I^K$-limit of $\{x_n\}_{n\in\mathbb{N}}$. The following is an example in support of this assertion.
\begin{exmp}
Let $I$ be ideal as in example \ref{3.2}. Also let $K$ be the ideal on $\mathbb{N}$ such that it is the collection of all subsets of $\mathbb{N}$ whose natural density is zero. Now let us define a sequence in real numbers space with usual norm by $x_n=\frac{1}{j}$ if $n\in D_j$. Let $r$ be a arbitrary non-negative real number. Let $\varepsilon >0$ be arbitrarily chosen, then there exists a $l\in \mathbb{N}$ such that $\varepsilon >\frac{1}{l}$. Clearly $[-r, r]\subset I-LIM^r x_n$, as $\{n\in\mathbb{N} : |x_n - x |\geq r + \varepsilon\}\subset  D_1 \cup D_2 \cup \cdots D_l\in I$ for any $x\in [-r, r]$.\\
If possible let $-r$ be a rough $I^K$-limit of $\{x_n\}$ of roughness degree $r$. Then there exists a $M=\{m_1<m_2<\cdots<m_k<\cdots\}\in F(I)$ such that the sub sequence $\{x_{m_k}\}$ is rough $K|M$-convergent of roughness degree $r$ to $-r$. Now as $N\setminus M=H(\text{say})\in I$, so there exists a $p\in \mathbb{N}$ such that $H\subset D_1\cup D_2 \cup\cdots \cup D_p$. Hence $D_k\subset M$ for all $k\geq p +1$. Let $\varepsilon =\frac{1}{p+1}$. Then $\{k\in\mathbb{N}: ||x_{m_k} + r||\geq r + \frac{1}{p+1}\}=\{k\in\mathbb{N}: m_k\in D_{p+1}\}$. As $D_{p+1}=\{2^{p}(2s -1): s=1,2,\cdots\}$ and natural density of $D_{p+1}=\frac{1}{2^{p+1}}$, therefore natural density of the set $\{k\in\mathbb{N}: ||x_{m_k} + r||\geq r + \frac{1}{p+1}\}$ is not zero. Hence $\{k\in\mathbb{N}: ||x_{m_k} + r||\geq r + \frac{1}{p+1}\}\notin K|M$, since natural density of each set belongs to $K|M$ is also zero. Therefore $-r$ is not a rough $I^K$-limit of $\{x_n\}$ of roughness degree $r$.

\end{exmp}
Rough $I$-limit $x$ of a sequence $\{x_n\}_{n\in\mathbb{N}}$ is also a rough $I^K$-limit of $\{x_n\}_{n\in\mathbb{N}}$ if the ideal $I$ satisfies the condition (AP). Thus we have the following Theorem.
\begin{thm}\label{1}
Let $I$ and $K$ be two admissible ideals on $\mathbb{N}$ such that the ideal $I$ satisfies the condition (AP). Also let $r$ be a non-negative real number and $\{x_n\}_{n\in\mathbb{N}}$ be a sequence in a normed linear space $(X, ||\cdot||)$. Then $x\in I-LIM^r x_n$ implies $x\in I^K-LIM^r x_n$.
\end{thm}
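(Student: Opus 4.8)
The plan is to deduce this from Theorem \ref{th2} together with the inclusion $I^*-LIM^r x_n \subset I^K-LIM^r x_n$ that was already noted after Definition \ref{def2}. Concretely, suppose $x\in I-LIM^r x_n$. Since $I$ satisfies the condition (AP), Theorem \ref{th2} applies and gives that $x$ is a rough $I^*$-limit of $\{x_n\}_{n\in\mathbb{N}}$ of the same roughness degree $r$, i.e. $x\in I^*-LIM^r x_n$. It then remains only to check the inclusion $I^*-LIM^r x_n \subset I^K-LIM^r x_n$, and chaining the two containments yields $x\in I^K-LIM^r x_n$.

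For the inclusion, recall that $x\in I^*-LIM^r x_n$ means there is a set $M=\{m_1<m_2<\cdots<m_k<\cdots\}\in F(I)$ such that the subsequence $\{x_{m_k}\}_{k\in\mathbb{N}}$ is rough convergent of degree $r$ to $x$; hence for every $\varepsilon>0$ the set $\{k\in\mathbb{N}: ||x_{m_k}-x||\geq r+\varepsilon\}$ is finite. Since $K$ is an admissible ideal it contains every finite subset of $\mathbb{N}$, so this finite set lies in $K$, and being a subset of $M$ it also lies in the trace $K|M=\{A\cap M: A\in K\}$. By Definition \ref{def2} this says precisely that $x\in I^K-LIM^r x_n$, which establishes the inclusion and completes the argument.

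One could also argue directly, by repeating the construction in the proof of Theorem \ref{th2}: fix a positive real number $l$, put $A_i=\{n\in\mathbb{N}: ||x_n-x||<r+\frac{l}{i}\}\in F(I)$ for each $i$, and use Lemma \ref{lem1} to obtain $B\in F(I)$ with $B\setminus A_i$ finite for all $i$; then take $M=B$, enumerate it increasingly, and verify that for every $\varepsilon>0$ the set $\{k\in\mathbb{N}: ||x_{m_k}-x||\geq r+\varepsilon\}$ is finite, hence belongs to $K|M$. Either way there is essentially no obstacle: the theorem is a corollary of Theorem \ref{th2}, the only point worth recording being that the condition (AP) is exactly AP($I$, Fin) and that $\mathrm{Fin}\subset K$ for an admissible $K$, so that a rough $I^*$-limit is automatically a rough $I^K$-limit.
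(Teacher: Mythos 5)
Your proposal is correct and follows essentially the same route as the paper: apply Theorem \ref{th2} (using the (AP) condition) to get $x\in I^*-LIM^r x_n$, then invoke the inclusion $I^*-LIM^r x_n \subset I^K-LIM^r x_n$ noted after Definition \ref{def2}. Your explicit verification of that inclusion (a finite exceptional set lies in the admissible ideal $K$, hence in the trace $K|M$) is a welcome detail that the paper leaves as an assertion, but it is not a different argument.
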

\begin{proof}
Suppose that $I$ and $K$ be two ideals on $\mathbb{N}$ such that the ideal $I$ satisfies the condition (AP). Let $\{x_n\}_{n\in\mathbb{N}}$ be sequence such that $x\in I-LIM^r x_n$. Now since $I$ satisfies the condition (AP), hence $x\in I^*-LIM^r x_n$. Now as $I^*-LIM^r x_n \subset I^K-LIM^r x_n$, therefore $x\in I^K-LIM^r x_n$.
\end{proof}

\begin{thm}\label{2}
Let $I$ and $K$ be two ideals on $\mathbb{N}$. If for any sequence $\{x_n\}_{n\in\mathbb{N}}$  in a normed linear space $(X, ||\cdot||)$ the implication $ I- LIM^r x_n \subset I^K- LIM^r x_n$ holds, then the the ideal $I$ has the additive property with respect to $K$, i.e., AP($I$, $K$) holds.
\end{thm}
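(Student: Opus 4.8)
The plan is to prove the contrapositive: assuming that AP$(I,K)$ fails, I will construct a single sequence $\{x_n\}_{n\in\mathbb{N}}$ in $X$ together with a point $x\in X$ such that $x\in I-LIM^r x_n$ but $x\notin I^K-LIM^r x_n$, which contradicts the hypothesis. First I would invoke Lemma \ref{apk}: since AP$(I,K)$ fails, condition $(v)$ of that lemma fails, so there is a non-decreasing sequence $A_1\subseteq A_2\subseteq\cdots$ of sets from $I$ for which \emph{no} sequence $\{B_j\}_{j\in\mathbb{N}}$ of sets from $I$ satisfies both $A_j\sim_K B_j$ for all $j$ and $\bigcup_{j\in\mathbb{N}}B_j\in I$. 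Fix a vector $e\in X$ with $||e||=1$, put $A_0=\phi$, and define $x_n=(r+\frac1j)e$ whenever $n\in A_j\setminus A_{j-1}$ (for $j\in\mathbb{N}$), $x_n=0$ whenever $n\notin\bigcup_j A_j$, and let $x=0$. Then $||x_n-x||$ equals $r+\frac1j$ on $A_j\setminus A_{j-1}$ and equals $0$ off $\bigcup_j A_j$, so for every $\varepsilon>0$ one computes that $\{n\in\mathbb{N}:||x_n-x||\geq r+\varepsilon\}=A_m$, where $m=\lfloor 1/\varepsilon\rfloor$ (with $A_0=\phi$).

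From this identity $x\in I-LIM^r x_n$ is immediate, since each such set $A_m$ lies in $I$. Next I would show $x\notin I^K-LIM^r x_n$. Suppose otherwise; by Definition \ref{def2} there is $M=\{m_1<m_2<\cdots\}\in F(I)$ along which the subsequence is rough $K|M$-convergent of roughness degree $r$ to $x$, so that $\{n\in M:||x_n-x||\geq r+\varepsilon\}\in K|M$, hence lies in $K$ (being a subset of $M$), for every $\varepsilon>0$. Taking $\varepsilon=\frac1j$ gives $M\cap A_j\in K$ for every $j\in\mathbb{N}$. Now set $B_j:=A_j\setminus M$. Since $\mathbb{N}\setminus M\in I$, each $B_j\in I$ and $\bigcup_j B_j\subseteq\mathbb{N}\setminus M\in I$; moreover $A_j\Delta B_j=A_j\cap M\in K$, so $A_j\sim_K B_j$ for every $j$. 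This contradicts the choice of $\{A_j\}$, so $x\notin I^K-LIM^r x_n$. Hence $x\in I-LIM^r x_n\setminus I^K-LIM^r x_n$, contradicting the assumed inclusion, and therefore AP$(I,K)$ holds.

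The main obstacle is the last step, namely translating the assumed rough $I^K$-convergence of the constructed sequence into a family $\{B_j\}$ witnessing the failing condition $(v)$ of Lemma \ref{apk} for $\{A_j\}$. The two facts that make this go through are that, along any $M\in F(I)$, the relevant ``bad sets'' at the scales $\varepsilon=1/j$ are exactly $M\cap A_j$, and that the single choice $B_j=A_j\setminus M$ simultaneously meets the three requirements $B_j\in I$, $\bigcup_j B_j\in I$ and $A_j\sim_K B_j$. One should also note the hypothesis $X\neq\{0\}$, implicit in choosing a unit vector $e$: if $X=\{0\}$ then $I-LIM^r x_n=I^K-LIM^r x_n=\{0\}$ for every sequence, so the inclusion holds trivially while AP$(I,K)$ may fail, which shows that some non-triviality of $X$ is indispensable. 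Everything else — closure of $I$ under finite unions and subsets, and the equivalence ``for $S\subseteq M$, $S\in K|M$ iff $S\in K$'' — is routine.
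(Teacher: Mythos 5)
Your proof is correct and follows essentially the same route as the paper's: the same sequence taking values $r+\tfrac{1}{j}$ (times a unit vector) on the sets $A_j$, the same extraction of $M\in F(I)$ from the assumed rough $I^K$-convergence to $0$, and the same witness $B_j=A_j\setminus M$, the only cosmetic differences being your contrapositive framing via condition $(v)$ of Lemma \ref{apk} rather than the paper's direct verification of condition $(iv)$ for disjoint families. Your remark that $X\neq\{0\}$ is implicitly needed is a valid observation the paper glosses over by silently working in $\mathbb{R}$.
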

\begin{proof}
Let $\{A_n\}_{n\in \mathbb{N}}$ be a sequence of mutually disjoint sets  belonging to $I$ and $r>0$ be a  real number. Define a sequence $\{x_n\}_{n\in\mathbb{N}}$ in real number space with usual norm as follows $x_n=\begin{cases}
r+\frac{1}{i},\:\:\:\:\: n\in A_i\\
0,\: \:\:\:\:\:\:  n\in \mathbb{N}\setminus \displaystyle{\cup_i} A_i
\end{cases}$. Then for any $\varepsilon>0$, there exists $p\in\mathbb{N}$ such that $\{n\in\mathbb{N}: || x_n - 0|| \geq r +\varepsilon\}\subset A_1\cup A_2\cup\cdots \cup A_p\in I$. So $0\in I-LIM^r x_n$. Consequently, by our assumption, $0\in I^K-LIM^r x_n$. Thus there exists a set $M=\{m_1<m_2<\cdots< m_k<\cdots\}\in F(I)$ such that the sub sequence $\{x_{m_k}\}_{k\in \mathbb{N}}$ is rough $K|M$-convergent to $0$ of roughness degree $r$. Now if $\displaystyle{\cup_i} A_i\in I$ then by taking $A_i =B_i$  for $i\in\mathbb{N}$ the results follows directly by using $(iv)$ of the lemma \ref{apk}. So let $\displaystyle{\cup_i} A_i\notin I$. Since $M\in F(I)$, so the set $M$ contains a infinite numbers of $A_i$'s. Now for  arbitrary $\varepsilon>0$ the set $\{k\in \mathbb{N}: |x_{m_k} - 0|\geq r+\varepsilon\}\in K|M$. For each $i\in\mathbb{N}$ either $A_i\cap M\neq \phi$ or $A_i\cap M=\phi$. By the construction of the sequence and by the fact that $0\in I^K-LIM^r x_n$ in both cases we have $A_i\cap M\in K|M$ and since $\varepsilon>0$ is arbitrary so, $A_i\cap M\in K$ for each $i\in\mathbb{N}$. Now as $M\in F(I)$, so $\mathbb{N}\setminus M=B(\text{say})\in I$. Let us put $B_i =A_i \cap B$ for each $i$. Then each of $B_i$ belongs to $I$. Also as $\displaystyle{\bigcup_{i=1}^{\infty}}B_i= \displaystyle{\bigcup_{i=1}^{\infty}}(A_i \cap B)= B \cap \displaystyle{\bigcup_{i=1}^{\infty}} A_i \subset B$, so $\displaystyle{\bigcup_{i=1}^{\infty}}B_i\in I$. Now as $B_i \subset A_i$, so $A_i\setminus B_i = A_i\cap M$. Thus $A_i\setminus B_i\in K$. Therefore $A_i \sim_K B_i$ for $i\in \mathbb{N}$, since $A_i \sim_K B_i \Leftrightarrow A_i \Delta B_i\in K$ and $A_i \Delta B_i = A_i \setminus B_i$ in this case. Thus by the virtue $(iv)$ of the lemma \ref{apk} the result follows.
\end{proof}

In the next example we will see that, as in the case rough $I^*$-limit, rough $I^K$-limit set of a sequence $\{x_n\}_{n\in\mathbb{N}}$ in a normed linear space may not be a subset of rough $I^K$-limit set of a sub sequence $\{x_{n_k}\}_{k\in\mathbb{N}}$. 
\begin{exmp}
Let $I$ be the collection of all subsets of $\mathbb{N}$ whose natural density is zero. Then $I$ is a non trivial admissible ideal on $\mathbb{N}$. Also let $\mathbb{N}= \displaystyle{\bigcup_{j=1} ^ \infty} D_j$ be a decomposition of $\mathbb{N}$ such that $D_j=\{2^{j-1}(2s -1): s=1, 2, \cdots\}$. Then each $D_j$ is infinite and $D_j \cap D_k =\phi$ for $j\neq k$. Now $K$ be the ideal such that it is the class of all subsets of $\mathbb{N}$ which intersects with only a finite numbers of $D_j$'s. Let us consider the sequence in real number space with usual norm, where $x_n =\frac{1}{j}$ if $n\in D_j$. Now as $\phi\in I$, so $\mathbb{N}\in F(I)$. Let us take $\mathbb{N}=M$. Let us enumerate $M$ as, $M =\{m_1<m_2<m_3<\cdots<m_k<\cdots\}$. Then $K|M= K$. Also let $r >0$ be arbitrary. Now for an arbitrary $\varepsilon >0$ one can find a $l\in \mathbb{N}$ such that $\varepsilon >\frac{1}{l}$. Now we have a $p\in\mathbb{N}$ such that $\{k\in\mathbb{N}: ||x_{m_k} + r||\geq r +\varepsilon\}\subset D_1 \cup D_2\cup \cdots \cup D_p\in K=K|M$. So $\{k\in\mathbb{N}: ||x_{m_k} + r||\geq r +\varepsilon\}\in K|M$. Therefore $-r$ is a rough $I^K$-limit of roughness degree $r$. Again let us consider the sub sequence $\{x_{n_k}\}$ of the sequence $\{x_n\}$ such that $\{x_{n_k}\}=1$ for all $k$'s. Then for any $0<\varepsilon<1$ and for any $M=\{m_1<m_2<\cdot<m_k<\cdots\}\in F(I)$, $\{k\in\mathbb{N}: ||x_{n_{m_k}} + r||\geq r +\varepsilon\}=\mathbb{N}$. Since $\mathbb{N}\notin K|M$, therefore $\{k\in\mathbb{N}: ||x_{m_k} + r||\geq r +\varepsilon\}\notin K|M$. Hence $-r$ is not a rough $I^K$-limit of the sub sequence $\{x_{m_k}\}$.
\end{exmp}
\begin{thm}\label{newone}
Let $I$ and $K$ be two admissible ideal on $\mathbb{N}$ such that $K\subset I$ and AP($I$, $K$) holds. Then the rough $I^K$-limit set of a sequence $\{x_n\}_{n\in\mathbb{N}}$ of some roughness degree $r$ is a subset of rough $I^K$-limit set of a sub sequence $\{x_{n_k}\}_{k\in\mathbb{N}}$ of same roughness degree $r$.
\end{thm}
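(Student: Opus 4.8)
The plan is to reduce the statement to facts already established. Let $x \in I^K-LIM^r x_n$. Since $K \subset I$, Theorem \ref{th3} gives $x \in I-LIM^r x_n$; and since the rough $I$-limit set of a sequence is contained in the rough $I$-limit set of any of its subsequences (the theorem of D\"undar et al. \cite{R7} on subsequences recalled above), we get $x \in I-LIM^r x_{n_k}$. After these two reductions the only thing left is the claim that, when AP($I$,$K$) holds, a rough $I$-limit of a sequence is also a rough $I^K$-limit of that same sequence. This is the converse of Theorem \ref{2}, it is the substantive step, and I would establish it by imitating the proof of Theorem \ref{th2}, with Lemma \ref{apk} playing the role that Lemma \ref{lem1} plays there.

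For that step, put $y_k := x_{n_k}$ and assume $x \in I-LIM^r y_k$. Fix a positive real number $l$ and set $A_i := \{k \in \mathbb{N} : ||y_k - x|| < r + \frac{l}{i}\}$ for each $i \in \mathbb{N}$; since $x$ is a rough $I$-limit of $\{y_k\}$, the complement of each $A_i$ lies in $I$, so $A_i \in F(I)$. Because AP($I$,$K$) holds, clause $(ii)$ of Lemma \ref{apk} yields a $K$-pseudo-intersection $M \in F(I)$ of $\{A_i\}_{i \in \mathbb{N}}$, that is, $M \setminus A_i \in K$ for every $i$; enumerate $M = \{m_1 < m_2 < \cdots < m_t < \cdots\}$. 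Given $\varepsilon > 0$, pick $j$ with $\frac{l}{j} < \varepsilon$. Then $\{k \in M : ||y_k - x|| \geq r + \varepsilon\} \subset M \setminus A_j \in K$, hence this set belongs to $K|M$; equivalently, under the bijection $t \mapsto m_t$, the index set $\{t \in \mathbb{N} : ||y_{m_t} - x|| \geq r + \varepsilon\}$ belongs to $K|M$. Thus the subsequence $\{y_{m_t}\}_{t \in \mathbb{N}}$ is rough $K|M$-convergent to $x$ of roughness degree $r$, which means $x \in I^K-LIM^r y_k = I^K-LIM^r x_{n_k}$, completing the proof.

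The only real obstacle is this sub-claim: the paper records the implication ``AP($I$,$K$) implies $I-LIM^r x_n \subset I^K-LIM^r x_n$'' only in the stronger Theorem \ref{1}, where $I$ itself is assumed to satisfy (AP), so one genuinely has to rerun the argument of Theorem \ref{th2} using the correct clause of Lemma \ref{apk} (clause $(ii)$, or equivalently $(iii)$, is the convenient one). A minor point is the bookkeeping when passing from $\{k \in M : ||y_k - x|| \geq r + \varepsilon\} \subset M$ to the index form of Definition \ref{def2}, but this is harmless since $t \mapsto m_t$ is a bijection of $\mathbb{N}$ onto $M$ and $K|M$ is an ideal of subsets of $M$. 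Note that both hypotheses get used: $K \subset I$ enters only through Theorem \ref{th3} in the first reduction, and AP($I$,$K$) enters only in the sub-claim; the case $r = 0$ needs no separate treatment.
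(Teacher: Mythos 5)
Your proposal is correct and follows the same three-step reduction as the paper's own proof: use $K\subset I$ and Theorem \ref{th3} to get $x\in I-LIM^r x_n$, pass to the subsequence via the rough $I$-limit subsequence property, then lift back to a rough $I^K$-limit using AP($I$,$K$). The one difference is in the final step: the paper simply asserts ``since AP($I$,$K$) holds, $x$ is a rough $I^K$-limit of the subsequence'' without proof, whereas you correctly observe that the paper only records this implication under the stronger hypothesis that $I$ satisfies (AP) (Theorem \ref{1}), and you supply the missing argument via the $K$-pseudo-intersection clause of Lemma \ref{apk}. That sub-claim and your proof of it are sound (your handling of the index-set versus subset-of-$M$ identification is consistent with the paper's own conventions), so your write-up is, if anything, more complete than the paper's.
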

\begin{proof}
Let $x$ be a rough $I^K$-limit of a sequence $\{x_n\}_{n\in\mathbb{N}}$ of some roughness degree $r$. Also let $I$ and $K$ be two ideals on $\mathbb{N}$ such that AP($I$, $K$) holds and $K\subset I$. Now since $K\subset I$, therefore $x$ is also a rough $I$-limit of the sequence $\{x_n\}_{n\in\mathbb{N}}$. So $x$ is also a rough $I$-limit of a sub sequence $\{x_{n_k}\}_{k\in\mathbb{N}}$ of the sequence $\{x_n\}_{n\in\mathbb{N}}$. Again since AP($I$, $K$) holds, so $x$ is a rough $I^K$-limit of the sub sequence $\{x_{n_k}\}_{k\in\mathbb{N}}$. Hence the results follows.
\end{proof}
\begin{rem}
Rough $I^K$-limit of a sequence $\{x_n\}_{n\in\mathbb{N}}$ of some roughness degree $r$ is also a rough $I*$-limit of $\{x_n\}_{n\in\mathbb{N}}$ of same roughness degree $r$ if some additional condition holds as given in the following Theorem.
\end{rem}
\begin{thm}
Let $I$ and $K$ be two admissible ideal on $\mathbb{N}$ such that $K\subset I$ and the ideal $I$ satisfies the condition (AP). Then for a sequence $\{x_n\}_{n\in\mathbb{N}}$ in $X$ rough $I^K$-limit of some roughness degree $r$ is a rough $I^*$-limit of same roughness degree $r$.
\end{thm}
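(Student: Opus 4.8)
The plan is to obtain this as a direct chaining of two results already established in the excerpt, namely Theorem \ref{th3} and Theorem \ref{th2}. Fix a sequence $\{x_n\}_{n\in\mathbb{N}}$ in $X$ and suppose $x\in I^K-LIM^r x_n$ for some roughness degree $r\geq 0$.

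First I would apply Theorem \ref{th3}. Its hypothesis is exactly that $I$ and $K$ are admissible ideals with $K\subset I$, which holds here, and its conclusion is that a rough $I^K$-limit of roughness degree $r$ is also a rough $I$-limit of the same roughness degree $r$. Hence $x\in I-LIM^r x_n$.

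Next I would apply Theorem \ref{th2}. Its hypothesis is that $I$ satisfies the condition (AP), which is assumed here, and its conclusion is that any rough $I$-limit of $\{x_n\}_{n\in\mathbb{N}}$ of some roughness degree $r$ is a rough $I^*$-limit of the same roughness degree $r$. Applying this to $x\in I-LIM^r x_n$ from the previous step yields $x\in I^*-LIM^r x_n$, which is precisely what is to be shown.

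There is essentially no obstacle: the statement is a corollary of the two cited theorems, and the only point to be careful about is that both invocations use the roughness degree $r$ unchanged, so the chain $I^K-LIM^r x_n \subset I-LIM^r x_n \subset I^*-LIM^r x_n$ preserves $r$ throughout. One could equivalently phrase the argument at the level of limit sets and then note it holds pointwise for each rough $I^K$-limit $x$.
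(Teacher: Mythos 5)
Your proposal is correct and follows essentially the same route as the paper: first use $K\subset I$ (Theorem \ref{th3}) to pass from the rough $I^K$-limit to the rough $I$-limit, then use the condition (AP) (Theorem \ref{th2}) to pass from the rough $I$-limit to the rough $I^*$-limit, with the roughness degree $r$ preserved throughout. If anything, your write-up is slightly cleaner, since the paper's proof momentarily conflates the hypothesis ``$I$ satisfies (AP)'' with ``AP($I$,$K$) holds,'' whereas you correctly invoke only the (AP) condition required by Theorem \ref{th2}.
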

\begin{proof}
Let $x$ be a rough $I^K$-limit of a sequence $\{x_n\}_{n\in\mathbb{N}}$ of some roughness degree $r$. Also let $I$ and $K$ be two ideals on $\mathbb{N}$ such that AP($I$, $K$) holds and $K\subset I$. Again since $K\subset I$, so $x$ is also a rough $I$-limit of the sequence $\{x_n\}_{n\in\mathbb{N}}$. Again since the ideal $I$ satisfies the condition (AP), so $x$ is a rough $I^*$-limit of the sequence $\{x_n\}_{n\in\mathbb{N}}$ by Theorem \ref{th2}.
\end{proof}
\begin{defn} [c.f. \cite{R7}]
Let $I$ and $K$ be two admissible ideals on $\mathbb{N}$. A sequence $\{x_n\}_{n\in\mathbb{N}}$ in a normed linear space is said to be $K|M$-bounded if there exists a set $M=\{m_1<m_2<\cdots<m_k<\cdots\}\in F(I)$ and a positive number $L$ such that for the sub sequence $\{x_{m_k}\}_{k\in \mathbb{N}}$ we have $\{k\in \mathbb{N}: || x_{m_k}||\geq L\}\in K|M$.
\end{defn}
Obviously for a bounded sequence $\{x_n\}_{n\in\mathbb{N}}$ there always exists a non-negative real number $r$ for which $I^K-LIM^r x_n\neq \phi$. The reverse implication is generally not valid which can be seen from the following example, but if we take the sequence $\{x_n\}_{n\in\mathbb{N}}$ to be $K|M$-bounded then the reserve implication also holds. 
\begin{exmp}\label{newexmp}
Let $I$ be the ideal of the class of all those subsets of $\mathbb{N}$ whose natural density is zero. Then $I$ is an admissible ideal on $\mathbb{N}$. Also let $K$ be any admissible ideal on $\mathbb{N}$. Let us consider the sequence $\{x_n\}_{n\in\mathbb{N}}$ in real number space with usual norm as follows: $x_n=\begin{cases}
1, \:\: n\neq k^2\\
n,\:\: n=k^2
\end{cases}$, for some $k\in\mathbb{N}$. Now as $A(\text{say}) =\{n\in\mathbb{N}: n=k^2 \:\text{for some}\: k\in\mathbb{N}\}\in I$, so $\mathbb{N}\setminus A=M(\text{say})\in F(I)$. Let us enumerate $M$ as, $M=\{m_1<m_2<\cdots< m_k<\cdots\}$. Now we see that for $r=1$ and for any $\varepsilon>0$, the set $\{k\in\mathbb{N}: |x_{m_k} - x|\geq r + \varepsilon\}=\phi\in K|M$ for any $x\in [0, 2]$. So $[0, 2]\subset I^K-LIM^r x_n$. But the sequence considered here is unbounded.
\end{exmp}
\begin{thm}
Let $I$ and $K$ be two admissible ideals on $\mathbb{N}$. Then a sequence $\{x_n\}_{n\in\mathbb{N}}$ is $K|M$-bounded if and only if there exists a non-negative real number $r$ such that $I^K-LIM^ r x_n\neq \phi$.  
\end{thm}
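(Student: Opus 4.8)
The plan is to adapt the proof of the analogous statement for rough $I$-convergence in \cite{R7}, working with the trace ideal $K|M$ in place of $I$ and along the subsequence indexed by the witnessing set $M\in F(I)$. Both implications reduce to one-line triangle-inequality computations; the only mild point to keep track of is that the set $M$ appearing in the definition of $K|M$-boundedness can be taken to be exactly the one appearing in the definition of rough $I^K$-convergence, which is immediate from the definitions.

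For the necessity part, I would suppose that $\{x_n\}_{n\in\mathbb{N}}$ is $K|M$-bounded, witnessed by a set $M=\{m_1<m_2<\cdots<m_k<\cdots\}\in F(I)$ and a positive number $L$ with $\{k\in\mathbb{N}:||x_{m_k}||\geq L\}\in K|M$. Then I would take $r:=L$ and consider the zero vector $0\in X$ as the candidate limit. For every $\varepsilon>0$ one has $\{k\in\mathbb{N}:||x_{m_k}-0||\geq r+\varepsilon\}\subseteq\{k\in\mathbb{N}:||x_{m_k}||\geq L\}\in K|M$, and since $K|M$ is an ideal on $\mathbb{N}$ this smaller set again belongs to $K|M$. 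Hence $\{x_{m_k}\}_{k\in\mathbb{N}}$ is rough $K|M$-convergent to $0$ of roughness degree $r$, so $0\in I^K-LIM^r x_n$ and in particular $I^K-LIM^r x_n\neq\phi$ for $r=L$.

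For the sufficiency part, I would assume $I^K-LIM^r x_n\neq\phi$ for some $r\geq 0$ and fix $x\in I^K-LIM^r x_n$. By Definition \ref{def2} there is a set $M=\{m_1<m_2<\cdots<m_k<\cdots\}\in F(I)$ such that $\{k\in\mathbb{N}:||x_{m_k}-x||\geq r+\varepsilon\}\in K|M$ for all $\varepsilon>0$; specializing to $\varepsilon=1$ gives $\{k\in\mathbb{N}:||x_{m_k}-x||\geq r+1\}\in K|M$. From $||x_{m_k}||\leq||x_{m_k}-x||+||x||$ it follows that $||x_{m_k}||\geq r+1+||x||$ forces $||x_{m_k}-x||\geq r+1$, whence $\{k\in\mathbb{N}:||x_{m_k}||\geq r+1+||x||\}\subseteq\{k\in\mathbb{N}:||x_{m_k}-x||\geq r+1\}\in K|M$. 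Setting $L:=r+1+||x||>0$, I obtain $\{k\in\mathbb{N}:||x_{m_k}||\geq L\}\in K|M$ with the same $M\in F(I)$, so $\{x_n\}_{n\in\mathbb{N}}$ is $K|M$-bounded.

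I do not expect a genuine obstacle here; the only step that needs a word of justification is that the trace $K|M$ is itself an ideal on $\mathbb{N}$, hence closed under passing to subsets, which has already been recorded immediately after Definition \ref{def2}.
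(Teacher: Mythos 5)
Your proof is correct and follows essentially the same route as the paper: exhibit $0$ as a rough $I^K$-limit for the necessity part, and use the triangle inequality along the witnessing set $M$ for the sufficiency part. The only cosmetic difference is that you take $r:=L$ where the paper takes $r:=\sup\{\|x_{m_k}\|:k\in K^\complement\}$; both choices work.
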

\begin{proof}
Suppose that the sequence $\{x_n\}_{n\in\mathbb{N}}$ is $K|M$-bounded. Then there exists a set $M=\{m_1<m_2<\cdots <m_k<\cdots\}\in F(I)$  and a positive number $L$ such that for the sub sequence $\{x_{m_k}\}$, $\{k\in\mathbb{N}: || x_{m_k}||\geq L \}=K(\text{say})\in K|M$. Define $r:=\sup{\{||x_{m_k}||: k\in K^\complement\}}$, where $K^\complement$ denote the complement of $K$ in $\mathbb{N}$. Then for any $\varepsilon>0$ we have $\{k\in\mathbb{N}: ||x_{m_k} - 0||\geq r +\varepsilon\}\subset K$. So $0\in I^K-LIM ^r x_n$.\\
Conversely suppose that $I^K-LIM^r x_n\neq \phi$ for some $r\geq 0$. Let $x\in I^K-LIM^r x_n$ and $||x||=L$. Then there exists a set $M=\{m_1<m_2<\cdots<m_k<\cdots\}\in F(I)$ such that for any $\varepsilon>0$ the set $\{k\in\mathbb{N}: ||x_{m_k} - x||\geq r +\varepsilon\}= K_1(\text{say})\in K|M$. Now $||x_{m_k}||=||x_{m_k} - x +x||\leq ||x_{m_k} - x|| + ||x||< r +\varepsilon + L$ for all $k\in K_1^\complement$. So $\{k\in\mathbb{N}: ||x_{m_k}||\geq r +\varepsilon + L\}\in K|M$. So the sequence $\{x_n\}_{n\in\mathbb{N}}$ is $K|M$-bounded.
\end{proof}
It is remarkable that the rough $I^K$-limit set of a sequence $\{x_n\}_{n\in\mathbb{N}}$ is convex set, as shown in the following theorem.
\begin{thm}
Let $I$ and $K$ be two ideals on $\mathbb{N}$ and $r$ be a non-negative real number. Then for a sequence $\{x_n\}_{n\in\mathbb{N}}$, the rough $I^K$-limit set $I^K-LIM^r x_n$ is convex.
\end{thm}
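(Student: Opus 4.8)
The plan is to mimic the argument that the rough $I$-limit set is convex, working with a common witness set for the two points being combined. If $I^K-LIM^r x_n$ contains fewer than two points there is nothing to prove, so fix $y_0,y_1\in I^K-LIM^r x_n$ and a real scalar $\lambda\in(0,1)$, and set $y_\lambda:=(1-\lambda)y_0+\lambda y_1$ (the cases $\lambda=0,1$ are immediate). Unwinding Definition \ref{def2}, for $i=0,1$ there is a set $M_i\in F(I)$ such that for every $\varepsilon>0$ the set $E_i(\varepsilon):=\{n\in M_i:||x_n-y_i||\geq r+\varepsilon\}$ belongs to the trace $K|M_i$; since every member of $K|M_i=\{A\cap M_i:A\in K\}$ is a subset of some set of $K$ and $K$ is an ideal, in fact $E_i(\varepsilon)\in K$.

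Next I would take $M:=M_0\cap M_1$ as the candidate witness set for $y_\lambda$. Because $F(I)$ is a filter, $M\in F(I)$ (and it is infinite, as $I$ is non trivial and admissible). Fix $\varepsilon>0$. The key elementary observation is that if $n\in M$ satisfies both $||x_n-y_0||<r+\varepsilon$ and $||x_n-y_1||<r+\varepsilon$, then, since $1-\lambda,\lambda\geq 0$,
\[
||x_n-y_\lambda||=||(1-\lambda)(x_n-y_0)+\lambda(x_n-y_1)||\leq (1-\lambda)||x_n-y_0||+\lambda||x_n-y_1||<r+\varepsilon .
\]
Taking the contrapositive, $\{n\in M:||x_n-y_\lambda||\geq r+\varepsilon\}\subseteq E_0(\varepsilon)\cup E_1(\varepsilon)$. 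As $E_0(\varepsilon),E_1(\varepsilon)\in K$ and $K$ is an ideal, $E_0(\varepsilon)\cup E_1(\varepsilon)\in K$, so the set $\{n\in M:||x_n-y_\lambda||\geq r+\varepsilon\}$ is a subset of a set of $K$ and hence lies in $K$; being also a subset of $M$, it lies in $K|M$. Since $\varepsilon>0$ was arbitrary, $M\in F(I)$ witnesses that $\{x_n\}_{n\in\mathbb{N}}$ is rough $I^K$-convergent to $y_\lambda$ of roughness degree $r$, i.e.\ $y_\lambda\in I^K-LIM^r x_n$, which establishes convexity.

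I do not expect a genuine obstacle here: the only point requiring care is the bookkeeping with the three traces $K|M_0$, $K|M_1$, $K|M$. One must first replace the two a priori different witness sets $M_0,M_1$ by their common refinement $M=M_0\cap M_1$ (using that $F(I)$ is a filter) before running the triangle-inequality estimate, and then observe that membership in $K$ together with being a subset of $M$ yields membership in $K|M$. Everything else is routine and parallels the convexity proof for the ordinary rough $I$-limit set recalled earlier in the paper.
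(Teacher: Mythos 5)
Your proposal is correct and follows essentially the same route as the paper's own proof: both arguments intersect the two witness sets from $F(I)$, apply the triangle inequality to the convex combination, and conclude membership in the trace ideal. Your write-up is in fact slightly more careful than the paper's at the step where membership in $K$ together with being a subset of $M$ yields membership in $K|M$, which the paper passes over with only the remark that $K$ is admissible.
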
 
\begin{proof}
Let us assume that $x_1, x_2\in I^K-LIM^r x_n$. Then there exists $M'=\{m'_1<m'_2<\cdots<m'_k<\cdots\}$ and $M''=\{m''_1<m''_2<\cdots<m''_k<\cdots\}$ in $F(I)$ such that $\{k\in\mathbb{N}: ||x_{m'_k} - x_1||\geq r +\varepsilon\}\in K|M'$ and $\{k\in\mathbb{N}: ||x_{m''_k} - x_2||\geq r +\varepsilon\}\in K|M''$. Now as both of $M'$ and $M''$ belongs to $F(I)$. Let $M=M'\cap M''$. Then $M\in F(I)$ and let us enumerate $M$ as, $M=\{m_1<m_2<\cdots<m_k<\cdots\}$. Since $K$ is an admissible ideal, therefore $\{k\in\mathbb{N}: ||x_{m_k} - x_1||\geq r +\varepsilon\}\in K|M\rightarrow(i)$ and also $\{k\in\mathbb{N}: ||x_{m_k} - x_2||\geq r +\varepsilon\}\in K|M\rightarrow(ii)$. Let $0\leq \lambda\leq 1$. Now as $||x_{m_i} - [(1 -\lambda)x_1 + \lambda x_2]||=||(1 -\lambda)(x_{m_i} - x_1) + \lambda (x_{m_i} - x_2)||< r +\varepsilon$ for each $i$ belongs to the complements of the set as in $(i)$ and $(ii)$ simultaneously. So $||k\in\mathbb{N}: ||x_{m_k} - [(1-\lambda)x_1 + \lambda x_2]||\geq r +\varepsilon\}\in K|M$.
\end{proof}
\begin{rem}
Let $I$ be an ideal on $\mathbb{N}$. Since for any non-negative real number $r$ and for a sequence $\{x_n\}_{n\in\mathbb{N}}$ in $X$ rough $I^*$-limit of roughness degree $r$ is also a rough $I^K$-limit of same roughness degree $r$, therefore rough $I^*$-limit set of roughness degree $r$ of $\{x_n\}_{n\in\mathbb{N}}$ is also a convex set.
\end{rem}
\begin{thm}
Let $I$, $I_1$, $I_2$, $K$, $K_1$ and $K_2$ be ideals on $\mathbb{N}$ such that $I_1\subset I_2$ and $K_1\subset K_2$. Also let $\{x_n\}_{n\in\mathbb{N}}$ be a sequence and $r$ be a non-negative real number. Then $(i)$ $I_1 ^ K-LIM^r x_n \subset I_2 ^K-LIM^r x_n$.\\
$(ii)$ $I^{K_1}-LIM^r x_n \subset I^{K_2}-LIM^r x_n$.
\end{thm}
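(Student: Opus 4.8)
The plan is to derive both inclusions straight from Definition~\ref{def2}, using nothing beyond two elementary monotonicity observations about ideals, their associated filters, and traces. First I would record these: (a) if $I_1 \subseteq I_2$ then $F(I_1) \subseteq F(I_2)$, since $M \in F(I_1)$ means $\mathbb{N}\setminus M \in I_1 \subseteq I_2$, hence $M \in F(I_2)$; and (b) for any fixed $M \subseteq \mathbb{N}$, if $K_1 \subseteq K_2$ then $K_1|M = \{A \cap M : A \in K_1\} \subseteq \{A \cap M : A \in K_2\} = K_2|M$. Both are immediate from the definitions, but it is worth isolating them since each part of the theorem rests on exactly one of them.

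For part (i), I would take $x \in I_1^{K}\text{-}LIM^r x_n$. By Definition~\ref{def2} there is a set $M = \{m_1 < m_2 < \cdots < m_k < \cdots\} \in F(I_1)$ such that the subsequence $\{x_{m_k}\}_{k\in\mathbb{N}}$ is rough $K|M$-convergent of roughness degree $r$ to $x$, i.e.\ $\{k \in \mathbb{N} : \|x_{m_k} - x\| \geq r + \varepsilon\} \in K|M$ for every $\varepsilon > 0$. By observation (a) the very same $M$ lies in $F(I_2)$, and the witnessing condition only mentions $M$ and $K|M$, neither of which has changed; hence $M$ also witnesses $x \in I_2^{K}\text{-}LIM^r x_n$, giving the inclusion.

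For part (ii), I would take $x \in I^{K_1}\text{-}LIM^r x_n$, witnessed by some $M \in F(I)$ with $\{k \in \mathbb{N} : \|x_{m_k} - x\| \geq r + \varepsilon\} \in K_1|M$ for all $\varepsilon > 0$. Keeping the same $M \in F(I)$ and applying observation (b), each of these sets also belongs to $K_2|M$, so $M$ now witnesses $x \in I^{K_2}\text{-}LIM^r x_n$.

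Since both arguments simply reuse the existing witness set $M$ and invoke a containment of filters (part (i)) or of traces (part (ii)), there is no genuine obstacle here; the only point to state with a little care is that passing from $K_1$ to $K_2$ really does enlarge the trace $K|M$ on the relevant index set, which is precisely observation (b), and that the enumeration $M = \{m_1 < m_2 < \cdots\}$ may be kept fixed throughout.
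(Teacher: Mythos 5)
Your proposal is correct and follows essentially the same route as the paper: part (i) reuses the witness $M$ after noting $\mathbb{N}\setminus M\in I_1\subset I_2$ gives $M\in F(I_2)$, and part (ii) reuses $M$ after noting $K_1|M\subset K_2|M$. The two monotonicity observations you isolate are exactly the two steps the paper's proof performs.
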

\begin{proof}
$(i)$ Suppose $x\in I_1 ^ K-LIM^r x_n$. Thus there exists a set $M=\{m_1<m_2<\cdots<m_k\cdots\}\in F(I_1)$ such that the sub sequence $\{x_{m_k}\}_{k\in\mathbb{N}}$ is rough $K|M$-convergent to $x$ of roughness degree $r$. Now as $I_1\subset I_2$, therefore $\mathbb{N}\setminus M\in I_1 \subset I_2$. Hence $M\in F(I_2)$. So $x\in I_2 ^K-LIM^r x_n$.\\
$(ii)$ Let $x\in I^{K_1}-LIM^r x_n$. Therefore the exists a set $M=\{m_1<m_2<\cdots< m_k<\cdots\}\in F(I)$ such that the sub sequence $\{x_{m_k}\}_{k\in\mathbb{N}}$ is rough $K_1|M$-convergent to $x$ of roughness degree $r$. Now as $K_1\subset K_2$, therefore the sub sequence $\{x_{m_k}\}_{k\in\mathbb{N}}$ is also rough $K_2|M$-convergent to $x$ of roughness degree $r$. Hence $x\in I^{K_2}-LIM^r x_n$.
\end{proof}
\begin{thm}
Let $I$ and $K$ be two admissible ideals on $\mathbb{N}$. Then rough $I^K$-limit set of a sequence $\{x_n\}_{n\in\mathbb{N}}$ is closed.
\end{thm}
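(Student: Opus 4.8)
The plan is to prove closedness by showing that $I^K-LIM^r x_n$ contains every one of its adherent points, in the same spirit as the closedness proofs for $LIM^r x_n$ and for $I-LIM^r x_n$, the extra ingredient being a careful treatment of the witnessing sets in $F(I)$. If $I^K-LIM^r x_n=\emptyset$ there is nothing to prove, so suppose it is non-empty, let $y\in\overline{I^K-LIM^r x_n}$, and (a normed space being metric, hence sequential) pick a sequence $\{y_j\}_{j\in\mathbb{N}}$ from $I^K-LIM^r x_n$ with $\|y_j-y\|<\tfrac{1}{j}$. For each $j$, Definition \ref{def2} supplies a set $M_j\in F(I)$ such that for every $\varepsilon>0$ the set $\{n\in M_j:\|x_n-y_j\|\geq r+\varepsilon\}$ belongs to $K$ (being of the form $A\cap M_j$ with $A\in K$, it lies in $K|M_j\subseteq K$). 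Putting $A_\delta:=\{n\in\mathbb{N}:\|x_n-y\|\geq r+\delta\}$, the triangle inequality gives $A_{2/j}\cap M_j\subseteq\{n\in M_j:\|x_n-y_j\|\geq r+\tfrac{1}{j}\}$, since $\|x_n-y\|\geq r+\tfrac{2}{j}$ together with $\|y_j-y\|<\tfrac{1}{j}$ forces $\|x_n-y_j\|\geq r+\tfrac{1}{j}$; hence $A_{2/j}\cap M_j\in K$ for every $j$.

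Next I would replace $\{M_j\}$ by the decreasing chain $N_j:=M_1\cap\cdots\cap M_j$, which remains in $F(I)$ because a filter is closed under finite intersections; thus $N_1\supseteq N_2\supseteq\cdots$, each $N_j\in F(I)$, and $A_{2/j}\cap N_j\subseteq A_{2/j}\cap M_j\in K$. The decisive step is then to produce a single set $M\in F(I)$ with $A_{2/j}\cap M\in K$ for every $j$ simultaneously. Granting such an $M$, for arbitrary $\varepsilon>0$ choose $j$ with $\tfrac{2}{j}<\varepsilon$; since $A_\varepsilon\subseteq A_{2/j}$ we get $A_\varepsilon\cap M\subseteq A_{2/j}\cap M\in K$, so $A_\varepsilon\cap M\in K|M$, which says precisely that the subsequence of $\{x_n\}$ along $M$ is rough $K|M$-convergent of degree $r$ to $y$, i.e. $y\in I^K-LIM^r x_n$; closedness then follows.

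The step I expect to be the main obstacle is exactly this passage from the countable decreasing family $\{N_j\}$ to a single $M\in F(I)$ that absorbs all of the $A_{2/j}$ at once. The natural attempt is a diagonalization over the $N_j$: enumerate $N_j=\{a^{(j)}_1<a^{(j)}_2<\cdots\}$ and build $M$ so that it stays inside $N_j$ from its $j$-th element onward, making each $M\setminus N_j$ finite (hence a member of the admissible ideal $K$) and giving $A_{2/j}\cap M\subseteq(A_{2/j}\cap N_j)\cup(M\setminus N_j)\in K$. The genuine difficulty is to carry out this diagonalization while keeping $\mathbb{N}\setminus M$ inside $I$, since $F(I)$ need not be closed under countable intersections; this is precisely where an additive-property condition on the ideals (as in Lemma \ref{apk}) is the clean tool, so that is where I would expect the real work (or an extra hypothesis) to be needed. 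By contrast, the triangle-inequality and monotonicity manipulations of the first two paragraphs are routine.
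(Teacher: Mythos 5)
Your reduction is correct as far as it goes: the triangle-inequality step giving $A_{2/j}\cap M_j\in K$ is sound, and you have correctly isolated the real issue, namely that Definition \ref{def2} demands a \emph{single} $M\in F(I)$ working for every $\varepsilon$ simultaneously, whereas the hypothesis only hands you one witness $M_j$ per $j$. As a proof, however, the proposal is incomplete: you stop at exactly that step and remark that an additive-property hypothesis would be the natural tool, without either assuming it or carrying out the diagonalization. Under AP$(I,K)$ the argument does close: by Lemma \ref{apk}(ii) the decreasing chain $N_j=M_1\cap\cdots\cap M_j\in F(I)$ admits a $K$-pseudo-intersection $M\in F(I)$, i.e.\ $M\setminus N_j\in K$ for all $j$, whence $A_{2/j}\cap M\subset (A_{2/j}\cap N_j)\cup(M\setminus N_j)\in K$, which is exactly the uniform statement you need. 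Without some such hypothesis I do not see how to manufacture $M$, since $F(I)$ is not closed under countable intersections and an arbitrary diagonal set need not have complement in $I$.

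You should also know that the paper's own proof does not resolve this difficulty --- it sidesteps it. There, $\varepsilon>0$ is fixed first, an index $n_1$ is chosen with $\|y_{n_1}-y\|<\varepsilon/2$, and the witness set $M$ for $y_{n_1}$ is used to conclude $\{k\in\mathbb{N}:\|x_{m_k}-y\|\geq r+\varepsilon\}\in K|M$. That $M$ depends on $\varepsilon$ (through $n_1$), so what is actually established is ``for every $\varepsilon$ there exists $M$'', which is the wrong order of quantifiers relative to Definition \ref{def2}. Your refusal to make that silent swap is to your credit; but a complete proof of the theorem as stated would have to either justify the uniformization (for instance by adding AP$(I,K)$, as sketched above) or weaken the definition to the $\varepsilon$-dependent quantifier order.
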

\begin{proof}
The proof is trivial when $I^K-LIM^r x_n=\phi$, where $r\geq 0$. Let us assume that $I^K-LIM^r x_n\neq \phi$ for some $r\geq 0$. Also let $\{y_n\}_{n\in\mathbb{N}}$ be a sequence in $I^K-LIM^r x_n$ such that $y_n \rightarrow y$. Since $y_n\rightarrow y$, so for a given $\varepsilon >0$ there exists a $N_1\in\mathbb{N}$ such that $||y_n - y||<\frac{\varepsilon}{2}$ for all $n>N_1$. Let $n_1> N_1$, therefore $||y_{n_1} - y|| < \frac{\varepsilon}{2}$. Again since $\{y_n\}_{n\in\mathbb{N}}$ is a sequence in $I^K-LIM^r x_n$, therefore $y_{n_1}\in I^K-LIM^r x_n$. Therefore there exists a set $M=\{m_1<m_2<\cdots<m_k<\cdots\}\in F(I)$ such that $\{k\in\mathbb{N}: ||x_{m_k} - y_{n_1}||\geq r +\frac{\varepsilon}{2}\}=K_1(\text{say})\in K|M$. Now for $k\in K_1^\complement$ we have, $|| x_{m_k} - y||= || x_{m_k} - y_{n_1} + y_{n_1} - y||\leq ||x_{m_k} - y_{n_1}|| +|| y_{n_1} - y||< r +\frac{\varepsilon}{2} +\frac{\varepsilon}{2}= r+\varepsilon$. Thus $\{k\in\mathbb{N}: ||x_{m_k} - y||\geq  r +\varepsilon\}\in K|M$. Therefore $y\in I^K-LIM^ r x_n$ and hence the result follows.
\end{proof}
 
\end{document}